\newtheorem{theorem}{Theorem}[section]
\newtheorem{prop}[theorem]{Proposition}
\theoremstyle{definition}
\newtheorem{defn}[theorem]{Definition}
\newtheorem{lemma}[theorem]{Lemma}
\newtheorem{prop-def}{Proposition-Definition}[section]
\newtheorem{coro-def}{Corollary-Definition}[section]
\newtheorem{exam}{Example}[section]
\newcommand{\nc}{\newcommand}
\nc{\tred}[1]{\textcolor{red}{#1}}
\nc{\tblue}[1]{\textcolor{blue}{#1}}
\nc{\tgreen}[1]{\textcolor{green}{#1}}
\nc{\tpurple}[1]{\textcolor{purple}{#1}}
\nc{\btred}[1]{\textcolor{red}{\bf #1}}
\nc{\btblue}[1]{\textcolor{blue}{\bf #1}}
\nc{\btgreen}[1]{\textcolor{green}{\bf #1}}
\nc{\btpurple}[1]{\textcolor{purple}{\bf #1}}
\nc{\NN}{{\mathbb N}}
\nc{\ncsha}{{\mbox{\cyr X}^{\mathrm NC}}} \nc{\ncshao}{{\mbox{\cyr
X}^{\mathrm NC}_0}}
\newcommand{\efootnote}[1]{}
\renewcommand{\textbf}[1]{}
\newcommand{\delete}[1]{}
\nc{\mlabel}[1]{\label{#1}}  % Use this to suppress names
\nc{\mcite}[1]{\cite{#1}}  % Use this to suppress names
\nc{\mref}[1]{\ref{#1}}  % Use this to suppress names
\nc{\mbibitem}[1]{\bibitem{#1}} % Use this to show number name
\nc{\mlabel}[1]{\label{#1}  % Use the next two lines to show names
{\hfill \hspace{1cm}{\small\tt{{\ }\hfill(#1)}}}}
\nc{\mcite}[1]{\cite{#1}{\small{\tt{{\ }(#1)}}}}  % Use this lines to show names
\nc{\mref}[1]{\ref{#1}{{\tt{{\ }(#1)}}}}  % Use this lines to show names
\nc{\mbibitem}[1]{\bibitem[\bf #1]{#1}} % Use this to show name
\nc{\opa}{\ast} \nc{\opb}{\odot} \nc{\op}{\bullet} \nc{\pa}{\frakL}
\nc{\arr}{\rightarrow} \nc{\lu}[1]{(#1)} \nc{\mult}{\mrm{mult}}
\nc{\diff}{\mathfrak{Diff}}
\nc{\opc}{\sharp}\nc{\opd}{\natural}
\nc{\ope}{\circ}
\nc{\dpt}{\mathrm{d}}
\nc{\tforall}{\text{ for all }}
\nc{\diam}{alternating\xspace}
\nc{\Diam}{Alternating\xspace}
\nc{\cdiam}{alternating\xspace}
\nc{\Cdiam}{Alternating\xspace}
\nc{\AW}{\mathcal{A}}
\nc{\rba}{Rota-Baxter algebra\xspace}
\nc{\ari}{\mathrm{ar}}
\nc{\lef}{\mathrm{lef}}
\nc{\Sh}{\mathrm{ST}}
\nc{\Cr}{\mathrm{Cr}}
\nc{\st}{{Schr\"oder tree}\xspace}
\nc{\sts}{{Schr\"oder trees}\xspace}
\nc{\vertset}{\Omega} % set of vertex decorations
\nc{\assop}{\quad \begin{picture}(5,5)(0,0)
\line(-1,1){10}
\put(-2.2,-2.2){$\bullet$}
\line(0,-1){10}\line(1,1){10}
\end{picture} \quad \smallskip}
\nc{\operator}{\begin{picture}(5,5)(0,0)
\line(0,-1){6}
\put(-2.6,-1.8){$\bullet$}
\line(0,1){9}
\end{picture}}
\nc{\idx}{\begin{picture}(6,6)(-3,-3)
\put(0,0){\line(0,1){6}}
\put(0,0){\line(0,-1){6}}
 \end{picture}}
\nc{\pb}{{\mathrm{pb}}}
\nc{\Lf}{{\mathrm{Lf}}}
\nc{\lft}{{left tree}\xspace}
\nc{\lfts}{{left trees}\xspace}
\nc{\fat}{{fundamental averaging tree}\xspace}
\nc{\fats}{{fundamental averaging trees}\xspace}
\nc{\avt}{\mathrm{Avt}}
\nc{\rass}{{\mathit{RAss}}}
\nc{\aass}{{\mathit{AAss}}}
\nc{\vin}{{\mathrm Vin}}    %decoration set of indices
\nc{\lin}{{\mathrm Lin}}    %decoration set of leaves
\nc{\inv}{\mathrm{I}n}
\nc{\gensp}{V} % space of generators
\nc{\genbas}{\mathcal{V}} % basis of the space of generators
\nc{\bvp}{V_P}     % Rota-Baxter generating space
\nc{\gop}{{\,\omega\,}}     % generic binary operation
\nc{\bin}[2]{ (_{\stackrel{\scs{#1}}{\scs{#2}}})}  %binomial coeff
\nc{\binc}[2]{ \left (\!\! \begin{array}{c} \scs{#1}\\
    \scs{#2} \end{array}\!\! \right )}  %binomial coeff
\nc{\bincc}[2]{  \left ( {\scs{#1} \atop
    \vspace{-1cm}\scs{#2}} \right )}  %binomial coeff
\nc{\bs}{\bar{S}} \nc{\cosum}{\sqsubset} \nc{\la}{\longrightarrow}
\nc{\rar}{\rightarrow} \nc{\dar}{\downarrow} \nc{\dprod}{**}
\nc{\dap}[1]{\downarrow \rlap{$\scriptstyle{#1}$}}
\nc{\md}{\mathrm{dth}} \nc{\uap}[1]{\uparrow
\rlap{$\scriptstyle{#1}$}} \nc{\defeq}{\stackrel{\rm def}{=}}
\nc{\disp}[1]{\displaystyle{#1}} \nc{\dotcup}{\
\displaystyle{\bigcup^\bullet}\ } \nc{\gzeta}{\bar{\zeta}}
\nc{\hcm}{\ \hat{,}\ } \nc{\hts}{\hat{\otimes}}
\nc{\barot}{{\otimes}} \nc{\free}[1]{\bar{#1}}
\nc{\uni}[1]{\tilde{#1}} \nc{\hcirc}{\hat{\circ}} \nc{\lleft}{[}
\nc{\lright}{]} \nc{\lc}{\lfloor} \nc{\rc}{\rfloor}
\nc{\curlyl}{\left \{ \begin{array}{c} {} \\ {} \end{array}
    \right .  \!\!\!\!\!\!\!}
\nc{\curlyr}{ \!\!\!\!\!\!\!
    \left . \begin{array}{c} {} \\ {} \end{array}
    \right \} }
\nc{\longmid}{\left | \begin{array}{c} {} \\ {} \end{array}
    \right . \!\!\!\!\!\!\!}
\nc{\onetree}{\bullet} \nc{\ora}[1]{\stackrel{#1}{\rar}}
\nc{\ola}[1]{\stackrel{#1}{\la}}%${\Bbb Z}$
\nc{\ot}{\otimes} \nc{\mot}{{{\boxtimes\,}}}
\nc{\otm}{\overline{\boxtimes}} \nc{\sprod}{\bullet}
\nc{\scs}[1]{\scriptstyle{#1}} \nc{\mrm}[1]{{\rm #1}}
\nc{\margin}[1]{\marginpar{\rm #1}}   %{\rm #1}}
\nc{\dirlim}{\displaystyle{\lim_{\longrightarrow}}\,}
\nc{\invlim}{\displaystyle{\lim_{\longleftarrow}}\,}
\nc{\mvp}{\vspace{0.3cm}} \nc{\tk}{^{(k)}} \nc{\tp}{^\prime}
\nc{\ttp}{^{\prime\prime}} \nc{\svp}{\vspace{2cm}}
\nc{\vp}{\vspace{8cm}} \nc{\proofbegin}{\noindent{\bf Proof: }}
\nc{\proofend}{$\blacksquare$ \vspace{0.3cm}}
\nc{\modg}[1]{\!<\!\!{#1}\!\!>}
\nc{\intg}[1]{F_C(#1)} \nc{\lmodg}{\!
<\!\!} \nc{\rmodg}{\!\!>\!}
\nc{\cpi}{\widehat{\Pi}}
\nc{\sha}{{\mbox{\cyr X}}}  %used to be \cyr
\nc{\shap}{{\mbox{\cyrs X}}} %sha as product
\nc{\shan}{{\overrightarrow \sha}}
\nc{\shpr}{\diamond}    %Shuffle product
\nc{\shp}{\ast} \nc{\shplus}{\shpr^+}
\nc{\shprc}{\shpr_c}    %Cartier's product
\nc{\msh}{\ast} \nc{\zprod}{m_0} \nc{\oprod}{m_1}
\nc{\vep}{\varepsilon} \nc{\labs}{\mid\!} \nc{\rabs}{\!\mid}
\nc{\sqmon}[1]{\langle #1\rangle}
\nc{\mmbox}[1]{\mbox{\ #1\ }} \nc{\dep}{\mrm{dep}} \nc{\fp}{\mrm{FP}}
\nc{\rchar}{\mrm{char}} \nc{\End}{\mrm{End}} \nc{\Fil}{\mrm{Fil}}
\nc{\Mor}{Mor\xspace} \nc{\gmzvs}{gMZV\xspace}
\nc{\gmzv}{gMZV\xspace} \nc{\mzv}{MZV\xspace}
\nc{\mzvs}{MZVs\xspace} \nc{\Hom}{\mrm{Hom}} \nc{\id}{\mrm{id}}
\nc{\im}{\mrm{im}} \nc{\incl}{\mrm{incl}} \nc{\map}{\mrm{Map}}
\nc{\mchar}{\rm char} \nc{\nz}{\rm NZ} \nc{\supp}{\mathrm Supp}
\nc{\Alg}{\mathbf{Alg}} \nc{\Bax}{\mathbf{Bax}} \nc{\bff}{\mathbf f}
\nc{\bfk}{{\bf k}} \nc{\bfone}{{\bf 1}} \nc{\bfx}{\mathbf x}
\nc{\bfy}{\mathbf y}
\nc{\base}[1]{\bfone^{\otimes ({#1}+1)}} %{{a_{#1}}}
\nc{\Cat}{\mathbf{Cat}}
\nc{\detail}{\marginpar{\bf More detail}
    \noindent{\bf Need more detail!}
    \svp}
\nc{\Int}{\mathbf{Int}} \nc{\Mon}{\mathbf{Mon}}
\nc{\rbtm}{{shuffle }} \nc{\rbto}{{Rota-Baxter }}
\nc{\remarks}{\noindent{\bf Remarks: }} \nc{\Rings}{\mathbf{Rings}}
\nc{\Sets}{\mathbf{Sets}} \nc{\wtot}{\widetilde{\odot}}
\nc{\wast}{\widetilde{\ast}} \nc{\bodot}{\bar{\odot}}
\nc{\bast}{\bar{\ast}} \nc{\hodot}[1]{\odot^{#1}}
\nc{\hast}[1]{\ast^{#1}} \nc{\mal}{\mathcal{O}}
\nc{\tet}{\tilde{\ast}} \nc{\teot}{\tilde{\odot}}
\nc{\oex}{\overline{x}} \nc{\oey}{\overline{y}}
\nc{\oez}{\overline{z}} \nc{\oef}{\overline{f}}
\nc{\oea}{\overline{a}} \nc{\oeb}{\overline{b}}
\nc{\weast}[1]{\widetilde{\ast}^{#1}}
\nc{\weodot}[1]{\widetilde{\odot}^{#1}} \nc{\hstar}[1]{\star^{#1}}
\nc{\lae}{\langle} \nc{\rae}{\rangle}
\nc{\lf}{\lfloor}
\nc{\rf}{\rfloor}
\nc{\QQ}{{\mathbb Q}}
\nc{\RR}{{\mathbb R}} \nc{\ZZ}{{\mathbb Z}}
\nc{\cala}{{\mathcal A}} \nc{\calb}{{\mathcal B}}
\nc{\calc}{{\mathcal C}}
\nc{\cald}{{\mathcal D}} \nc{\cale}{{\mathcal E}}
\nc{\calf}{{\mathcal F}} \nc{\calg}{{\mathcal G}}
\nc{\calh}{{\mathcal H}} \nc{\cali}{{\mathcal I}}
\nc{\call}{{\mathcal L}} \nc{\calm}{{\mathcal M}}
\nc{\caln}{{\mathcal N}}\nc{\calo}{{\mathcal O}}
\nc{\calp}{{\mathcal P}} \nc{\calr}{{\mathcal R}}
\nc{\cals}{{\mathcal S}} \nc{\calt}{{\mathcal T}}
\nc{\calu}{{\mathcal U}} \nc{\calw}{{\mathcal W}} \nc{\calk}{{\mathcal K}}
\nc{\calx}{{\mathcal X}} \nc{\CA}{\mathcal{A}}
\nc{\fraka}{{\mathfrak a}} \nc{\frakA}{{\mathfrak A}}
\nc{\frakb}{{\mathfrak b}} \nc{\frakB}{{\mathfrak B}}
\nc{\frakD}{{\mathfrak D}} \nc{\frakF}{\mathfrak{F}}
\nc{\frakf}{{\mathfrak f}} \nc{\frakg}{{\mathfrak g}}
\nc{\frakH}{{\mathfrak H}} \nc{\frakL}{{\mathfrak L}}
\nc{\frakM}{{\mathfrak M}} \nc{\bfrakM}{\overline{\frakM}}
\nc{\frakm}{{\mathfrak m}} \nc{\frakP}{{\mathfrak P}}
\nc{\frakN}{{\mathfrak N}} \nc{\frakp}{{\mathfrak p}}
\nc{\frakS}{{\mathfrak S}} \nc{\frakT}{\mathfrak{T}}
\nc{\frakX}{{\mathfrak X}} \nc{\frakx}{\mathfrak{x}}
\nc{\BS}{\mathbb{S
}}
\font\cyr=wncyr10 \font\cyrs=wncyr7
\nc{\li}[1]{\textcolor{red}{#1}}
\nc{\lir}[1]{\textcolor{red}{Li:#1}}
\nc{\sz}[1]{\textcolor{blue}{SZ: #1}}
\nc{\ID}{\mathfrak{I}} \nc{\lbar}[1]{\overline{#1}}
\nc{\bre}{{\rm b}} \nc{\sd}{\cals} \nc{\rb}{\rm RB}
\nc{\A}{\rm angularly decorated\xspace} \nc{\LL}{\rm L}
\nc{\w}{\rm wid} \nc{\arro}[1]{#1}
\nc{\ver}{\rm ver}
\nc{\FN}{F_{\mathrm N}}
\nc{\FNA}{\FN(A)} \nc{\NA}{N_{A}}
\nc{\dr}{\diamond_r}
\nc{\shar}{{\mbox{\cyrs X}}_r} %shar as right-shift shuffle product
\nc{\dt}{\Delta_T}
\nc{\da}{\Delta_A}
\nc{\vt}{\vep_T }
\nc{\bul}{\bullet}
\nc{\fraku}{{\mathfrak U}}
\begin{document}

\title[Left counital Hopf algebra structures on free commutative Nijenhuis algebras ]{Left counital Hopf algebra structures on free commutative Nijenhuis algebras}
%
%=========================================================================
\author{Shanghua Zheng}
\address{Department of Mathematics, Jiangxi Normal University, Nanchang, Jiangxi 330022, China}
         \email{zheng2712801@163.com}

\author{Li Guo}
\address{Department of Mathematics and Computer Science,
         Rutgers University,
         Newark, NJ 07102, USA}
\email{liguo@rutgers.edu}

%========================================================================
\date{\today}
%========================================================================
\begin{abstract}
Motivated by the Hopf algebra structures established on free commutative Rota-Baxter algebras, we explore Hopf algebra related structures on free commutative Nijenhuis algebras. Applying a cocycle condition, we first prove that a free commutative Nijenhuis algebra on a left counital bialgebra (in the sense that the right-sided counicity needs not hold) can be enriched to a left counital bialgebra. We then establish a general result that a connected graded left counital bialgebra is a left counital right antipode Hopf algebra in the sense that the antipode is also only right-sided. We finally apply this result to show that the left counital bialgebra on a free commutative Nijenhuis algebra on a connected left counital bialgebra is connected and graded, hence is a left counital right antipode Hopf algebra.
\end{abstract}

\subjclass[2010]{16T99,16W99,16S10}

\keywords{Nijenhuis algebra, bialgebra, left counital bialgebra, connected bialgebra, left counital right antipode Hopf algebra}

\maketitle

\tableofcontents

\setcounter{section}{0}

\allowdisplaybreaks

%========================================================================
\section{Introduction}

The concept of a {\bf Nijenhuis operator} first appeared in the Lie algebra context from the important notion of a Nijenhuis torsion, from the study of pseudo-complex manifolds of Nijenhuis~\mcite{N} in the 1950s. This concept is also related to the well-known concepts of Schouten-Nijenhuis bracket, the Fr\"olicher-Nijenhuis bracket~\mcite{FN} and the Nijenhuis-Richardson bracket.   Furthermore,  Nijenhuis operators  on Lie algebras play an important role  in the study
of  integrability of nonlinear evolution equations~\mcite{Do}, and  appeared in the contexts of Poisson-Nijenhuis manifolds~\mcite{KM},  the classical Yang-Baxter equation~\mcite{GS1,GS2} and  the Poisson
structure~\mcite{MM}.
More recently, Nijenhuis operators have been studied for $n$-Lie algebras, integrable systems, Hom-Lie algebra and integrable hierarchies~\mcite{C,LSZB,P,Sh}.

A Nijenhuis operator on an associative algebra $R$ is a linear endomorphism $P:R\to R$ satisfying the {\bf Nijenhuis equation}:

\begin{equation}
    P(x)P(y) = P(P(x)y) + P(xP(y)) - P^2(xy)\quad \tforall x,y \in R.
    \mlabel{eq:Nij}
\end{equation}
The Nijenhuis operator on an associative algebra was introduced by Carinena et. al.~\mcite{CGM} to study quantum bi-Hamiltonian systems.
In~\mcite{Uc}, Nijenhuis operators are constructed by analogy with Poisson-Nijenhuis geometry, from relative Rota-Baxter operators. In~\mcite{LG,Le}, relations with dendriform type algebras were studied.

The Nijenhuis operator is in close analogue with the {\bf Rota-Baxter operator} of weight $\lambda$ (where $\lambda$ is a constant), the latter being defined by the {\bf Rota-Baxter equation}
\begin{equation}
 P(x)P(y)=P(P(x)y) + P(xP(y)) +\lambda P(xy) \quad \tforall x, y\in R.
 \mlabel{eq:rbo}
 \end{equation}
Originated from the probability study of G. Baxter~\mcite{Ba}, the Rota-Baxter operator was studied by Cartier and Rota~\mcite{Ca,Ro} and is closely related to the operator form of the classical Yang-Baxter equation~\mcite{Bai,STS}. Its intensive study in the last two decades found many applications in mathematics and physics, most notably the work of Connes and Kreimer on renormalization of quantum field theory~\mcite{CK,EGK,EGM}. See~\mcite{Gub} for further details and references.

Theoretic developments of Nijenhuis algebras have been in parallel to those of Rota-Baxter algebras. For example, free commutative and noncommutative Nijenhuis algebras were constructed in~\mcite{EL,LG} following the construction of free commutative and noncommutative Rota-Baxter algebras~\mcite{GK1,EG}. In~\mcite{ZGGS}, the Nijenhuis operator, as one of the Rota-Baxter type operators, is studied in the context of Rota's problem~\mcite{Ro2} on classification of operator identities.

Motivated by the close relationship between divided power Hopf algebra and shuffle product Hopf algebra with Rota-Baxter algebras, Hopf algebra structures have been found on free commutative Rota-Baxter algebras for quite a few years~\mcite{AGKO,EG1}. Recently, Hopf algebra structures on free (non-commutative) Rota-Baxter algebras have been obtained~\mcite{ZGG} by a one-cocycle property in analogous to the Connes-Kreimer Hopf algebra of rooted trees~\mcite{CK}. Thus it is natural to explore a Hopf algebra structure on free Nijenhuis algebras. The purpose of this paper is to pursue this for free commutative Nijenhuis algebras. As it turns out, this approach does not give a bona fide Hopf algebra, only one with a left-sided counit and right-sided antipode. Interestingly, related structures have appeared in the study of quantum groups~\mcite{RT} (tracing back to~\mcite{GNT}) and combinatorics~\mcite{FLS,FS}. In both cases, the term one-side Hopf algebra was used though in the former case only one-sided antipode is optional while in the latter case both a one-sided antipode and the opposite side unit are optional. To distinguish from these notions, we will use the terms left counital bialgebra and left counital Hopf algebra for our constructions. See~\mcite{BNS,Li,Wa} for some other variations of Hopf algebras with relaxed conditions.

The layout of the paper is as follows. In Section~\mref{sec:FreeCNij} we recall the construction of free commutative Nijenhuis algebras by a generalization of the shuffle product and discuss a combinatorial identity from a special case. In Section~\mref{sec:bialg} we equip the free commutative Nijenhuis algebras with a left counital bialgebra. In Section~\mref{sec:hopf}, we first extend the classical result that a connected graded bialgebra is a Hopf algebra to the left unital context. We then show that the above-mentioned left counital bialgebra structure on free commutative Nijenhuis algebras is graded and connected, and thus yields a left counital Hopf algebra.

\smallskip

\noindent
{\bf Convention. } In this paper, all algebras are taken to be unitary commutative over a unitary commutative ring $\bfk$. Also linear maps and tensor products are taken over $\bfk$.

\section{Free commutative Nijenhuis algebras}
\label{sec:FreeCNij}

In this section, we recall the notion of Nijenhuis algebras and the construction of free commutative Nijenhuis algebras by the right-shift shuffle product~\mcite{EL}.
We then consider some special cases of free commutative Nijenhuis algebras.

\subsection{Free commutative Nijenhuis algebras on a commutative algebra}

\begin{defn}
A {\bf Nijenhuis algebra} is an associative algebra $R$ equipped with a linear operator $P$, called {\bf Nijenhuis operator}, satisfying the {\bf Nijenhuis equation} in Eq.~(\mref{eq:Nij}). A homomorphism from a Nijenhuis algebra $(R,P)$ to a Nijenhuis algebra $(S,Q)$ is an algebra homomorphism $f: R\to S$ such that $f P = Qf$.
\end{defn}

We recall the concept and construction of the free commutative Nijenhuis algebra on a commutative algebra.

\begin{defn}
{\rm
Let $A$ be a commutative algebra. A free commutative Nijenhuis
algebra on $A$ is a commutative Nijenhuis algebra $\FN(A)$ with a
Nijenhuis operator $\NA$ and an algebra homomorphism $j_A:
A\to \FN(A)$ such that, for any commutative Nijenhuis algebra $(R,P)$ and
any  algebra homomorphism $f:A\to R$, there is a unique
 Nijenhuis algebra homomorphism $\free{f}: \FN(A)\to R$
such that $f=\free{f}\circ j_A$, that is, the following diagram
$$ \xymatrix{ A \ar[rr]^{j_A}\ar[drr]^{f} && \FN(A) \ar[d]_{\free{f}} \\
&& R}
$$
commutes.}
\mlabel{de:CNij}
\end{defn}

For a given unital commutative algebra $A$ with unit $1_A$, we will give a Nijenhuis algebra structure on the $\bfk$-module
\begin{equation}
\shan(A):=\bigoplus_{n\geq 1}A^{\ot n}. \notag
\end{equation}
Here $A^{\ot n}$ is the $n$-th tensor power of $A$.

We first define the {\bf right-shift operator $P_r$} on $\shan (A)$ by
\begin{equation}
P_r:\shan (A)\to \shan (A), \quad P_r(\fraka)=1_A\ot \fraka\ \  \tforall \fraka\in A^{\ot n}, n\geq 1.
\end{equation}
We then define a multiplication on $\shan (A)$ as follows.

For $\fraka=a_1\ot \cdots \ot a_m\in A^{\ot m}$ and $\frakb=b_1\ot \cdots \ot b_n\in A^{\ot n}$, denote $\fraka'=a_2\ot \cdots \ot a_m$ if $m\geq 2$ and $\frakb'=b_2\ot \cdots \ot b_n$ if $n\geq 2$, so that $\fraka =a_1\ot \fraka'$ and $\frakb=b_1\ot \frakb'$. Define a multiplication $\dr$ on $\shan(A)$ by the following recursion.
\begin{equation}
\fraka\dr \frakb = \left \{\begin{array}{ll} a_1b_1, & m=n=1, \\
a_1b_1\ot \frakb', & m=1, n\geq 2, \\
a_1b_1\ot \fraka', & m\geq 2, n=1,\\
a_1b_1\ot \Big( \fraka' \dr (1\ot \frakb') + (1\ot \fraka')\dr \frakb' - 1\ot (\fraka' \dr \frakb')\Big), & m, n\geq 2.
\end{array} \right .
\label{eq:dfndr}
\end{equation}

Alternatively, $\dr$ can be defined by
$$ \fraka\dr \frakb = a_1b_1\ot (\fraka' \shar \frakb')$$
where $\shar$ is the right-shift shuffle product defined in~\mcite{EL} whose precise definition we will not recall since it is not needed in the sequel.

Let
$$j_A: A\to \shan (A),\,a\mapsto a,$$
be the natural embedding. Then
$$j_A(ab)=ab=a\dr b=j_A(a)\dr j_A(b),\,\tforall\, a,b\in A.$$
So $j_A$ is an algebra homomorphism.
Then we have
\begin{theorem}{\bf \cite{EL}}
Let $A$ be a commutative algebra with unit $1_A$. Let $\shan (A),P_r$, $\dr$ and $j_A$ be defined as above.  Then
\begin{enumerate}
\item
The triple $(\shan (A), \dr, P_r)$ is a commutative Nijenhuis algebra;
\item
The quadruple~$(\shan (A),\dr,P_r, j_A)$ is the free commutative Nijenhuis algebra on $A$.
\end{enumerate}
\mlabel{thm:freeCNij}
\end{theorem}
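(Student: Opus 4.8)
The plan is to treat the two parts in turn, establishing the algebra structure in $(1)$ first since the universal property in $(2)$ rests on it. Before anything else one checks that the recursion~\eqref{eq:dfndr} really does define $\dr$: each of the four clauses expresses $\fraka\dr\frakb$ in terms of products of strictly smaller total tensor degree $m+n$, so $\dr$ is well defined by induction on $m+n$ and then extended bilinearly to $\shan(A)$. Commutativity is proved by the same induction: since $A$ is commutative, $a_1b_1=b_1a_1$, the three low clauses are manifestly symmetric under swapping $\fraka$ and $\frakb$, and in the clause $m,n\geq 2$ the induction hypothesis identifies $\fraka'\dr(1_A\ot\frakb')+(1_A\ot\fraka')\dr\frakb'-1_A\ot(\fraka'\dr\frakb')$ with its $\fraka\leftrightarrow\frakb$ counterpart. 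Associativity is the one genuinely laborious point; I would prove it by induction on the total tensor degree of the three factors, expanding both sides of the associativity identity by~\eqref{eq:dfndr} and matching terms case by case according to which factors have degree $1$. Since the result is due to~\mcite{EL}, an economical alternative is to invoke the associativity of the right-shift shuffle product $\shar$ proved there and transport it along the identity $\fraka\dr\frakb=a_1b_1\ot(\fraka'\shar\frakb')$.

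The Nijenhuis equation for $P_r$, by contrast, falls out of the recursion almost immediately. Writing $P_r(\fraka)=1_A\ot\fraka$ and $P_r(\frakb)=1_A\ot\frakb$, both arguments have degree $\geq 2$, so the last clause of~\eqref{eq:dfndr} gives
\[
P_r(\fraka)\dr P_r(\frakb)=1_A\ot\big(\fraka\dr(1_A\ot\frakb)+(1_A\ot\fraka)\dr\frakb-1_A\ot(\fraka\dr\frakb)\big),
\]
and the three summands, after the outer application of $1_A\ot(-)=P_r$, become exactly $P_r(\fraka\dr P_r(\frakb))$, $P_r(P_r(\fraka)\dr\frakb)$ and $-P_r^2(\fraka\dr\frakb)$; this is precisely Eq.~\eqref{eq:Nij} for $(\shan(A),\dr,P_r)$. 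Together with commutativity and associativity this yields part~$(1)$.

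For part~$(2)$ the key observation is the decomposition $\fraka=a_1\dr P_r(\fraka')$ for $m\geq 2$, which the recursion confirms, together with $\fraka=j_A(a_1)$ for $m=1$. Hence any Nijenhuis algebra homomorphism $g\colon\shan(A)\to R$ with $g\circ j_A=f$ must satisfy $g(a_1\ot\cdots\ot a_m)=f(a_1)\,P\big(g(a_2\ot\cdots\ot a_m)\big)$, which determines $g$ uniquely and dictates the definition
\[
\free{f}(a_1\ot\cdots\ot a_m):=f(a_1)P\big(f(a_2)P\big(\cdots P(f(a_m))\cdots\big)\big),
\]
extended linearly. The identities $f=\free{f}\circ j_A$ and $\free{f}\circ P_r=P\circ\free{f}$ are then immediate, the latter because $\free{f}(1_A\ot\fraka)=f(1_A)P(\free{f}(\fraka))=P(\free{f}(\fraka))$ using that $f$ is unital; what remains, and is the heart of the argument, is multiplicativity $\free{f}(\fraka\dr\frakb)=\free{f}(\fraka)\free{f}(\frakb)$.

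I would prove multiplicativity by induction on $m+n$. The three low clauses of~\eqref{eq:dfndr} reduce directly to $f$ being an algebra homomorphism and to the defining recursion for $\free{f}$. The decisive case is $m,n\geq 2$: applying $\free{f}$ to the last clause and invoking the induction hypothesis on the strictly smaller products $\fraka'\dr(1_A\ot\frakb')$, $(1_A\ot\fraka')\dr\frakb'$ and $\fraka'\dr\frakb'$ rewrites $\free{f}(\fraka\dr\frakb)$ as
\[
f(a_1)f(b_1)\,P\big(x\,P(y)+P(x)\,y-P(xy)\big),\qquad x:=\free{f}(\fraka'),\ y:=\free{f}(\frakb'),
\]
and here the Nijenhuis equation~\eqref{eq:Nij} in $R$ collapses the bracket to $P(x)P(y)$, giving $f(a_1)f(b_1)P(x)P(y)=\free{f}(\fraka)\free{f}(\frakb)$ by commutativity of $R$. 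Thus the Nijenhuis axiom on $R$ is exactly what is needed to match the top clause of $\dr$, and it enters the whole argument only at this single step. The main obstacle is therefore bookkeeping rather than conceptual: the longest computation is the nested case analysis for associativity of $\dr$ in part~$(1)$, whereas the role of the Nijenhuis identity is cleanly isolated in the $m,n\geq 2$ step of the multiplicativity induction.
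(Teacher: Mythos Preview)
The paper does not supply its own proof of this theorem: it is stated with attribution to~\mcite{EL} and then used as a black box for the rest of the paper. So there is no in-paper argument to compare against; your proposal is an independent reconstruction.

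That reconstruction is correct and follows the natural line. Your observation that the last clause of~\eqref{eq:dfndr}, specialised to $P_r(\fraka)\dr P_r(\frakb)$, is literally the Nijenhuis identity for $P_r$ is exactly the point of the construction; likewise, in part~(2) the decomposition $\fraka=a_1\dr P_r(\fraka')$ forces the formula for $\free{f}$, and your induction on $m+n$ for multiplicativity, with the Nijenhuis equation in $R$ entering precisely at the $m,n\geq 2$ step, is the standard and correct argument. The one place where your sketch defers real work is associativity of $\dr$; your two options (a direct triple induction, or transport from the right-shift shuffle $\shar$ of~\mcite{EL}) are both legitimate, and since the theorem is quoted from~\mcite{EL} anyway, invoking that source for associativity is entirely in keeping with how the present paper treats the result.
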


\subsection{Special cases} Let $X$ be a nonempty set and let $A=\bfk[X]$. Then the Nijenhuis algebra $\shan (\bfk[X])$ is the free commutative Nijenhuis algebra on $X$ defined by the usual universal property.

Now let $A=\bfk$. Then we obtain the free commutative Nijenhuis algebra
$$\shan (\bfk)=\bigoplus_{n\geq 0}\bfk^{\ot (n+1)}.$$
We denote the unit $1_\bfk$ of $\bfk$ by $1$ for abbreviation. Since the tensor product is over $\bfk$, by bilinearity, we have $\bfk^{\ot (n+1)}=\bfk 1^{\ot (n+1)}$ for all $n\geq0$.
Then
\begin{equation}
\shan (\bfk)=\bigoplus_{n\geq 0} \bfk 1^{\ot (n+1)}.
\mlabel{eq:dirsum}
\end{equation}
Thus $\shan (\bfk)$ is a free $\bfk$-module on the basis $1^{\ot n}$, $n\geq 1$.
\begin{prop} For any $m,n\in\NN$,
\begin{equation}
1^{\ot (m+1)}\dr1^{\ot (n+1)}=1^{\ot (m+n+1)}.
\mlabel{eq:spec}
\end{equation}
\mlabel{prop:spec}
\end{prop}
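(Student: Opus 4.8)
The plan is to prove the identity by induction on $m+n$, unwinding the recursive definition of $\dr$ in Eq.~(\mref{eq:dfndr}) in the specialized setting $A=\bfk$, where every tensor factor equals $1$. To streamline the bookkeeping I would write $u_k:=1^{\ot k}$ for $k\geq 1$, so that the claim becomes $u_{m+1}\dr u_{n+1}=u_{m+n+1}$, or equivalently $u_p\dr u_q=u_{p+q-1}$ for all $p,q\geq 1$. The advantage of this reformulation is that the total tensor length $p+q$ strictly decreases in every product appearing on the right-hand side of the recursion, which is precisely what the induction needs.

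For the base case I would take $p+q=2$, i.e. $p=q=1$; the first line of Eq.~(\mref{eq:dfndr}) gives $u_1\dr u_1=1\cdot 1=u_1$, as required. For the boundary cases $p=1,\,q\geq 2$ and $p\geq 2,\,q=1$, the second and third lines of Eq.~(\mref{eq:dfndr}) give $u_1\dr u_q=1\ot u_{q-1}=u_q$ and $u_p\dr u_1=1\ot u_{p-1}=u_p$ respectively, both matching $u_{p+q-1}$. These verifications are immediate and use no induction hypothesis.

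The heart of the argument is the case $p,q\geq 2$. Here the fourth line of Eq.~(\mref{eq:dfndr}), together with the identities $1\ot u_{q-1}=u_q$ and $1\ot u_{p-1}=u_p$, yields
\[
u_p\dr u_q=1\ot\Big(u_{p-1}\dr u_q+u_p\dr u_{q-1}-1\ot(u_{p-1}\dr u_{q-1})\Big).
\]
Each of the three products inside the bracket has total length strictly less than $p+q$, so the induction hypothesis applies: $u_{p-1}\dr u_q=u_{p+q-2}$, $u_p\dr u_{q-1}=u_{p+q-2}$, and $u_{p-1}\dr u_{q-1}=u_{p+q-3}$, whence $1\ot(u_{p-1}\dr u_{q-1})=u_{p+q-2}$. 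Substituting these, the bracket collapses to $u_{p+q-2}+u_{p+q-2}-u_{p+q-2}=u_{p+q-2}$, and prepending the leading factor $1$ gives $u_p\dr u_q=1\ot u_{p+q-2}=u_{p+q-1}$.

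I do not anticipate a genuine obstacle: once the reformulation $u_p\dr u_q=u_{p+q-1}$ is in place, the only point to watch is that the three summands of the Nijenhuis recursion degenerate into a single copy of $u_{p+q-2}$ — exactly the cancellation that makes the weight-zero case far simpler than the general shuffle product $\shar$. Since the statement is asserted for $m,n\in\NN$, I would finally note that the cases $m=0$ or $n=0$ are already subsumed by the boundary computations above (taking $p=1$ or $q=1$).
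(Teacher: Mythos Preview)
Your proof is correct and follows essentially the same route as the paper: induction on the total tensor length, with the boundary cases $p=1$ or $q=1$ read off directly from the first three lines of Eq.~(\mref{eq:dfndr}), and the recursive case $p,q\geq 2$ handled by applying the induction hypothesis to the three terms of the Nijenhuis recursion and observing the cancellation $u_{p+q-2}+u_{p+q-2}-u_{p+q-2}=u_{p+q-2}$. Your reformulation $u_p\dr u_q=u_{p+q-1}$ is a cosmetic change that tidies the indices, but the argument is otherwise identical to the paper's.
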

\begin{proof}
We prove Eq.~\eqref{eq:dirsum} by  induction on $m+n\geq 0$. For $m+n=0$, we get $m=n=0$. Then the definition of $\dr$ in Eq.~(\mref{eq:dfndr}) yields
$1 \dr 1 = 1$.
For $k\geq 0$, assume that Eq.~\eqref{eq:dirsum} has been proved for $m+n\geq k$. Consider the case of $m+n=k+1$. Then either $m\geq 1$ or $n\geq 1$. If one of $m$ or $n$ is zero, then by Eq.~\eqref{eq:dfndr}, we have
$1^{\ot (m+1)}\dr 1^{\ot (n+1)}=1^{\ot (m+n+1)}.$ If none of $m$ or $n$ is zero, then by Eq.~\eqref{eq:dfndr} and the induction hypothesis, we have
\begin{eqnarray*}
1^{\ot (m+1)}\dr 1^{\ot (n+1)}&=& 1\ot \left( 1^{\ot m}\dr 1^{\ot (n+1)}+1^{\ot (m+1)}\dr 1^{\ot n} - 1\ot (1^{\ot m}\dr 1^{\ot n})\right)\\
&=& 1\ot \left (1^{\ot (m+n)}+1^{\ot (m+n)}-1\ot 1^{\ot (m+n-1)}\right)\\
&=& 1^{\ot (m+n+1)}.
\end{eqnarray*}
This completes the induction.
\end{proof}

Note that the definition of the Nijenhuis operator in Eq.~\eqref{eq:Nij} can be regarded as formally taking $\lambda=-P$ in the definition of Rota-Baxter operator in Eq.~\eqref{eq:rbo}. In fact, the construction of commutative Nijenhuis algebras is similar to that of free commutative Rota-Baxter algebras $(\sha_\bfk(A):=\sha (A),P_A,\diamond_\lambda)$ on $A$ of weight $\lambda\in\bfk$~\cite[Chapter~3, Theorem~3.2.1]{Gub}. Let $A=\bfk$. Then $(\sha_\bfk(\bfk),P_\bfk,\diamond_\lambda)$ is the free commutative Rota-Baxter algebra on~$\bfk$ of weight $\lambda$. When $A=\bfk$, the module structure of $\sha(A)$ is
$$\sha_\bfk(\bfk)=\bigoplus_{n\geq0}\bfk1^{\ot(n+1)}$$
and the multiplication $\diamond_\lambda$ is given by

\begin{equation}
1^{\ot (m+1)}\diamond_\lambda 1^{\ot (n+1)}=\sum_{k=0}^{\min\{m,n\}}{m+n-k\choose m}{m\choose k}\lambda^k 1^{\ot(m+n+1-k)} \ \ \tforall m, n\geq0.
\mlabel{eq:rbeq}
\end{equation}

Since the Rota-Baxter operator can be transformed to the Nijenhuis operator by formally replacing $\lambda$ by $-P$, doing so and hence $\lambda^k$ by $(-1)^k P^k$ in the above equation, we can expect to obtain the corresponding formula for the free Nijenhuis algebra $\shan(\bfk)$, namely,

$$ 1^{\ot (m+1)}\dr 1^{\ot (n+1)}=\sum_{k=0}^{\min\{m,n\}}{m+n-k\choose m}{m\choose k}(-1)^k P_r^k ( 1^{\ot(m+n+1-k)})
=\sum_{k=0}^{\min\{m,n\}}{m+n-k\choose m}{m\choose k}(-1)^k 1^{\ot(m+n+1)}$$
since $P_r(1^{\ot \ell})=1^{\ot (\ell+1)}.$
Comparing with Eq.~\eqref{eq:spec} suggests the identity

\begin{equation}
\sum_{k=0}^{\min\{m,n\}}(-1)^k{{m+n-k}\choose{m}}{{m}\choose{k}}\,=\,1.
\notag
\end{equation}
This is in fact a combinatorial identity which is related to the probability distribution in quantum field theory. See \cite{Lee} where the formula, under the assumption $m\geq n$ without loss of generality, is proved by identifying the coefficient of $x^m$ in the series expansion of the product
$$ (1-x)^n(1-x)^{-n-1}=(1-x)^{-1}=\sum_{k=0}^\infty x^k.$$
The formula can also be proved by enumerating stuffles~\cite{Gub}.

We end this section by equipping $\shan(\bfk)$ with a natural Hopf algebra structure.
Denote $u_n:=1^{\ot (n+1)},n\geq 0$. Then by Eq.~(\mref{eq:spec}),
\begin{equation}
u_n\dr u_m=u_{m+n},\,\tforall\, m,n\geq 0.
\end{equation}
Thus $(\shan (\bfk),\dr)$ is the free commutative algebra (that is, the polynomial algebra) $\bfk[u_1]$ generated by $u_1(=1^{\ot 2})$. Using the universal property of free commutative algebras, we find that $\shan (\bfk)$ has a unique bialgebra structure with the comultiplication $\Delta(u_n)=\sum_{i=0}^n{n\choose i}u_i\ot u_{n-i}$ and the counit $\vep(u_0)=1$, and $\vep(u_n)=0, \,n\geq 1$. Define a linear map $u:\bfk\to \shan (\bfk), c\mapsto c$ for all $c\in \bfk$. Thus we have
\begin{theorem}
\begin{enumerate}
\item
The bialgebra $(\shan (\bfk),\dr,u,\Delta,\vep)$  with the unique antipode
$$S:\shan (\bfk)\to \shan (\bfk),\,\,u_n\mapsto (-1)^nu_n,\,n\geq 0,$$
is the binomial Hopf algebra.
\item
The free commutative Nijenhuis algebra $\shan (\bfk)$ is a Hopf algebra.
\end{enumerate}
\mlabel{thm:bfk}
\end{theorem}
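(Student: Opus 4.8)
The plan is to establish both parts by exhibiting explicit structure maps and verifying the defining identities. For part (a), the key observation is that $(\shan(\bfk),\dr)\cong\bfk[u_1]$ is a polynomial algebra in one variable, so by the universal property of free commutative algebras every algebra homomorphism out of it is determined by the image of $u_1$. I would first verify that the proposed $\Delta$ and $\vep$ are genuinely algebra homomorphisms $\shan(\bfk)\to\shan(\bfk)\ot\shan(\bfk)$ and $\shan(\bfk)\to\bfk$ respectively; for $\Delta$ this amounts to checking that $\Delta(u_1)=u_0\ot u_1+u_1\ot u_0$ extends multiplicatively to the stated binomial formula $\Delta(u_n)=\sum_{i=0}^n\binom{n}{i}u_i\ot u_{n-i}$, which follows from the Vandermonde/binomial convolution since $u_n\dr u_m=u_{m+n}$. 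Coassociativity and counitality then hold automatically because they are algebra-homomorphism identities that need only be checked on the generator $u_1$, where they are immediate.

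Next I would produce the antipode. The defining equation $m\circ(S\ot\id)\circ\Delta=u\circ\vep=m\circ(\id\ot S)\circ\Delta$ is again an identity of algebra maps, so it suffices to verify it on $u_1$, or equivalently to check the convolution identity $\sum_{i=0}^n\binom{n}{i}S(u_i)\dr u_{n-i}=\vep(u_n)u_0$. Substituting $S(u_i)=(-1)^iu_i$ and using $u_i\dr u_{n-i}=u_n$ reduces this to the elementary identity $\sum_{i=0}^n(-1)^i\binom{n}{i}=\delta_{n,0}$, which proves $S$ is a two-sided antipode. The uniqueness of the antipode is standard in any bialgebra. Identifying the result as the binomial (divided-power-dual) Hopf algebra is then a matter of naming, comparing $\Delta$ with the standard binomial coproduct.

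Part (b) is then essentially immediate: since $\shan(\bfk)$ carries a bialgebra structure with a two-sided antipode by part (a), it is a Hopf algebra, and this is precisely the underlying $\bfk$-module of the free commutative Nijenhuis algebra $\shan(\bfk)$ by the identification in Eq.~\eqref{eq:dirsum} together with Proposition~\ref{prop:spec}. I would phrase part (b) as simply recording that the Hopf algebra of part (a) coincides with the free commutative Nijenhuis algebra on $\bfk$.

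The main obstacle, such as it is, lies in part (a): one must be careful that $\Delta$ and $\vep$ are defined as the unique algebra homomorphisms determined by their values on $u_1$ and then confirm that the formula they produce on each $u_n$ agrees with the stated binomial expression, rather than trying to verify coassociativity by a direct computation on arbitrary $u_n$. Once everything is reduced to identities on the single generator $u_1$, each verification collapses to a short binomial-coefficient computation, so the proof is routine; the only genuine content is recognizing that the multiplication $\dr$ on $\shan(\bfk)$ is nothing but polynomial multiplication, which Proposition~\ref{prop:spec} supplies.
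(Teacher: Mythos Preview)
Your proposal is correct and follows essentially the same approach as the paper: both rely on Proposition~\ref{prop:spec} to identify $(\shan(\bfk),\dr)$ with the polynomial algebra $\bfk[u_1]$, then invoke the universal property of the free commutative algebra to define $\Delta$ and $\vep$ by their values on the generator $u_1$, yielding the standard binomial Hopf algebra. The paper actually gives no separate proof of the theorem---the paragraph preceding it simply records the identification and declares the bialgebra structure---so your explicit verifications (coassociativity and counitality on $u_1$, the antipode check via $\sum_{i}(-1)^i\binom{n}{i}=\delta_{n,0}$) merely fill in details the paper leaves to the reader.
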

We will consider its generalization to other free Nijenhuis algebras in the next section.

\section{The left counital bialgebra structure on free commutative Nijhenhuis algebras}
\mlabel{sec:bialg}

In this section, we will equip a free commutative Nijenhuis algebra $\shan(A)$ with a left conunital Hopf algebra structure, when the generating algebra $A$ is a left counital bialgebra. Let $A:=(A,m_A,\mu_A,\Delta_A,\vep_A)$ be a left counital bialgebra. We first construct a comultiplication on the free commutative Nijehuis algebra $\shan (A):=(\shan (A), \dr, P_r)$ by applying a one-cocycle property. Then we give the construction of a left counit on $\shan (A)$. The left counital Hopf algebra property of $\shan(A)$ will be considered in the next section.

\subsection{Comultiplication by cocycle condition}
First we give the construction of left counital coalgebra on $\shan (A)$ by giving the comultiplication and the left counit on free commutative  Nijenhuis algebras. The coassociativity and left counicity will be proved later.

First we introduce the notions of an operated bialgebra and a cocycle bialgebra with a left counit.
\begin{defn}
\begin{enumerate}
\item
A {\bf left counital coalgebra} is a triple $(H,\Delta,\vep)$, where the comultiplication $\Delta:H\to H\ot H$ satisfies the coassociativity and the counit $\vep:H\to \bfk$ satisfies the {\bf left  counicity}: $(\vep\ot\id)\Delta =\beta_\ell$, where $\beta_\ell:H\to \bfk\ot H, u\mapsto 1\ot u\ \tforall u\in H$;
\item
A {\bf left counital operated bialgebra} is a sextuple $(H,m,\mu,\Delta,\vep,P)$, where the triple $(H,\Delta,\vep)$ is a left counitial coalgebra, and the pair $(H,P)$ is an operated algebra, that is, an algebra $H$ with a linear operator $P:H\to H$;
\item
A {\bf left  counital cocycle bialgebra} is a left  operated bialgebra $(H,m,\mu,\Delta,\vep,P)$ satisfying the one-cocycle property\footnote{The cocycle condition $\Delta P=P\ot 1+(\id \ot P)\Delta$, which is use to construct the Hopf algebra structure on free Rota-Baxter algebras~\cite{ZGG}, does not work for free Nijenhuis algebras.}:
\begin{equation}
\Delta P=(\id\ot P)\Delta.
\mlabel{eq:cocy}
\end{equation}
\end{enumerate}
\end{defn}

Any bialgebra is a left counital bialgebra. As a simple example of left counital bialgebra which is not a bialgebra, consider $\bfk[x]$ with the coproduct
$\Delta(u)=1\ot u$ for all $u\in \bfk[x]$ and the usual counit
$\vep:\bfk[x]\to \bfk$ defined by $\vep(x)=0, \vep(1)=1$. Then it is easy to check all the conditions of a bialgebra except the right counicity: $(\id\ot \vep)\Delta = \beta_r$ where
$\beta_r:\bfk[x]\to \bfk[x]\ot \bfk, u\mapsto u\ot 1$.
Since
$$(\id\ot \vep)\Delta (x)=(\id \ot \vep)(1\ot x)=1\ot 0=0\neq x\ot 1,$$
the right counicity does not hold.

Note that the tensor product $\shan (A)\ot \shan (A)$ is also an associative algebra whose multiplication will be denoted by $\bul$.

Now we give the definition of the comultiplicaiton $\dt:\shan (A)\to \shan (A)\ot \shan (A)$. For  every pure tensor
$\fraka:=a_1\ot a_2\ot\cdots\ot a_n\in A^{\ot n},n\geq 1$, we define $ \dt$ by induction on $n\geq 1$. For $n=1$, that is, $\fraka=a_1\in A$, we define $\dt(\fraka):=\da(a_1)$ to be the coproduct $\da$ on $A$.
Assume that $\dt$ has been defined for $n\geq 1$. Consider
 $\fraka\in A^{\ot (n+1)}$. Then $\fraka=a_1\ot\fraka'$ with $\fraka':=a_2\ot \cdots\ot a_n \in A^{\ot n}.$
 By Eq.~(\mref{eq:dfndr}), we have
\begin{equation}
a_1\ot\fraka'=a_1\dr(1_A\ot \fraka')=a_1\dr P_r(\fraka').
\mlabel{eq:drbase}
\end{equation} We first define
 \begin{equation}
 \dt(1_A\ot \fraka')=(\id\ot P_r)\dt(\fraka').
 \mlabel{eq:dtpr}
 \end{equation}
We then  define
\begin{equation}
\dt(a_1\ot \fraka')=\Delta_A(a_1)\bul\dt(1_A\ot \fraka')(=\Delta_A(a_1)\bul(\id\ot P_r)\dt(\fraka')).
\mlabel{eq:dfndtre}
\end{equation}
Here since $\fraka'$ is in $A^{\ot n}$, $\dt(\fraka')$ in Eq.~(\mref{eq:dtpr}) is well-defined by the induction hypothesis. Thus $\dt(\fraka)$ is well-defined.
The definition of $\dt$ also can be rewritten as follows.
\begin{equation}
\dt(\fraka)=
\da(a_1)\bul\Bigg((\id\ot P_r)\Bigg(\da(a_2)\bul\Big((\id\ot P_r)\big(\cdots\bul(\id\ot P_r)\da(a_n)\big)\Big)\Bigg)\Bigg).
\mlabel{eq:dfndt}
\end{equation}

\begin{exam}Note that $\bfk$-algebra $\bfk$  has a natural bialgebra structure, where the comulitplication $\Delta_\bfk$ and the counit $\vep_\bfk$ are defined by
\begin{equation}
\Delta_\bfk:\bfk\to \bfk\ot \bfk,\,c\mapsto c\ot 1,\quad\text{and}\quad \vep_\bfk:\bfk\to \bfk, c\mapsto c.\,\tforall\,c\in \bfk.
\end{equation}
Then $\bfk$ becomes a left counital bialgebra. Then by Eq.~(\mref{eq:dirsum}), we get
 $$\shan (\bfk)=\bigoplus_{n\geq 0} \bfk 1^{\ot (n+1)}.$$
For every $c_i\in\bfk,1\leq i\leq 3$, by the definition of $\dt$ in Eq.~(\mref{eq:dfndt}),
 \begin{eqnarray*}
 \dt(c_1\ot c_2\ot c_3)&=&\Delta_\bfk(c_1)\bul\Bigg((\id\ot P_r)\Bigg(\Delta_\bfk(c_2)\bul\Big((\id\ot P_r)\Delta_\bfk(c_3)\Big)\Bigg)\Bigg)\\
 &=&\Delta_\bfk(c_1)\bul\Bigg((\id\ot P_r)\Bigg((c_2\ot 1)\bul\Big(c_3\ot 1^{\ot 2}\Big)\Bigg)\Bigg)\\
 &=&\Delta_\bfk(c_1)\bul\Bigg((\id\ot P_r)\Bigg(c_2c_3\ot 1^{\ot 2}\Big)\Bigg)\Bigg)\quad(\text{by Eq.~(\mref{eq:dfndr}}))\\
 &=&(c_1\ot 1)\bul(c_2c_3\ot 1^{\ot 3})\quad(\text{by Eq.~(\mref{eq:dfndr}}))\\
 &=&c_1c_2c_3\ot1^{\ot 3}.
 \end{eqnarray*}
 More generally, let $u_n:=1^{\ot (n+1)}$ for all $n\geq 0$. Then
 $$\dt(u_0)=\Delta_\bfk(1)=1^{\ot 2}=1\ot u_0,$$
By Eq.~(\mref{eq:dtpr}), we obtain
 $$\dt(u_n)=(\id\ot P_r)\dt(u_{n-1})=1\ot u_n.$$
 Thus $\dt$ is different from the coproduct defined in Theorem~\mref{thm:bfk}.
\end{exam}

We next give the construction of the left counit $\vt$ on $\shan (A)$ by using the left counit $\vep_A$ of $A$.
For every pure tensor $\fraka=a_1\ot a_2\ot \cdots\ot a_n\in A^{\ot n}$ with $n\geq 1$, we define
\begin{equation}
\vt:\shan (A)\to \bfk,\,\fraka\mapsto\vep_A(a_1)\vep_A(a_2)\cdots \vep_A(a_n).
\mlabel{eq:dfnvt}
\end{equation}

\begin{lemma} For any pure tensors $\fraka\in A^{\ot m}$ and $\frakb\in A^{\ot n}$ with $m,n\geq1$, we have
\begin{equation}
(\id\ot \dt)(\id\ot P_r)(\fraka\ot \frakb)=(\id\ot \id \ot P_r)(\id \ot \dt)(\fraka\ot \frakb)
\mlabel{eq:comupr1}
\end{equation}
and
\begin{equation}
(\dt\ot \id)(\id\ot P_r)(\fraka\ot \frakb)=(\id\ot \id \ot P_r)(\dt \ot\id)(\fraka\ot \frakb).
\mlabel{eq:comupr2}
\end{equation}
\mlabel{lem:comupr}
\end{lemma}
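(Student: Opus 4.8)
The plan is to verify the two identities separately by directly unwinding the action of each map on its tensor factors, reducing Eq.~(\mref{eq:comupr1}) to the defining recursion for $\dt$ in Eq.~(\mref{eq:dtpr}) and Eq.~(\mref{eq:comupr2}) to the interchange law for linear maps acting on disjoint tensor slots. No induction on the tensor degree is needed here: both identities follow directly from the definitions already in place, together with $\bfk$-linearity.

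For Eq.~(\mref{eq:comupr1}), I would first evaluate the left-hand side. Since $P_r(\frakb)=1_A\ot\frakb$ by the definition of the right-shift operator, applying $(\id\ot P_r)$ to $\fraka\ot\frakb$ gives $\fraka\ot(1_A\ot\frakb)$, and $(\id\ot\dt)$ then produces $\fraka\ot\dt(1_A\ot\frakb)$. On the right-hand side, writing $\dt(\frakb)=\sum\frakb_{(1)}\ot\frakb_{(2)}$, the map $(\id\ot\dt)$ sends $\fraka\ot\frakb$ to $\sum\fraka\ot\frakb_{(1)}\ot\frakb_{(2)}$, and $(\id\ot\id\ot P_r)$ then gives $\sum\fraka\ot\frakb_{(1)}\ot P_r(\frakb_{(2)})=\fraka\ot(\id\ot P_r)\dt(\frakb)$. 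Hence the two sides agree precisely when $\dt(1_A\ot\frakb)=(\id\ot P_r)\dt(\frakb)$, which is exactly Eq.~(\mref{eq:dtpr}) applied to the pure tensor $\frakb$. This settles the first identity.

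For Eq.~(\mref{eq:comupr2}), the point is that $\dt$ acts only on the first tensor slot while $P_r$ acts only on the last, so the two operators touch disjoint factors and commute. Explicitly, on the left-hand side $(\id\ot P_r)(\fraka\ot\frakb)=\fraka\ot P_r(\frakb)$, and $(\dt\ot\id)$ then gives $\dt(\fraka)\ot P_r(\frakb)$; on the right-hand side $(\dt\ot\id)(\fraka\ot\frakb)=\dt(\fraka)\ot\frakb$, and $(\id\ot\id\ot P_r)$ then applies $P_r$ to the last slot to give $\dt(\fraka)\ot P_r(\frakb)$. Both expressions equal $\dt(\fraka)\ot P_r(\frakb)$, so the identity holds by the bifunctoriality of the tensor product.

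The only point I expect to require care---more a matter of bookkeeping than a genuine obstacle---is keeping track of which tensor slot each map occupies, a task complicated by the fact that $\dt$ raises the number of factors from one to two. For instance, on the right-hand side of Eq.~(\mref{eq:comupr1}) one must check that $(\id\ot\id\ot P_r)$ lands on the factor coming from $\frakb$ rather than from $\fraka$. Since $\fraka$ and $\frakb$ are pure tensors and every map in sight is $\bfk$-linear, expanding $\dt(\frakb)$ and $\dt(\fraka)$ in Sweedler notation makes these slot assignments unambiguous, after which both computations are immediate.
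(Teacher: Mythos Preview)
Your proposal is correct and follows essentially the same approach as the paper's proof. Both arguments reduce Eq.~(\ref{eq:comupr1}) to the defining relation $\dt(1_A\ot\frakb)=(\id\ot P_r)\dt(\frakb)$ of Eq.~(\ref{eq:dtpr}) and verify Eq.~(\ref{eq:comupr2}) by observing that both sides equal $\dt(\fraka)\ot P_r(\frakb)$; your use of Sweedler notation for the intermediate steps is a cosmetic difference only.
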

\begin{proof} We first prove that Eq.~(\mref{eq:comupr1}) holds for any pure tensors $\fraka\in A^{\ot m}$ and $\frakb\in A^{\ot n}$. By Eq.~(\mref{eq:dtpr}), we obtain
\begin{eqnarray*}
(\id\ot \dt)(\id\ot P_r)(\fraka\ot \frakb)&=&(\id\ot \dt P_r)(\fraka\ot\frakb)\\
&=&\fraka \ot (\dt P_r(\frakb))\\
&=&\fraka\ot ((\id \ot P_r)\dt (\frakb))\\
&=&(\id\ot \id\ot P_r)(\id \ot \dt)(\fraka\ot \frakb).\\
\end{eqnarray*}
 Eq.~(\mref{eq:comupr2}) follows from
\begin{eqnarray*}
(\dt\ot \id)(\id\ot P_r)(\fraka\ot \frakb)&=& (\dt\ot P_r)(\fraka\ot \frakb)\\
&=&\dt(\fraka)\ot P_r(\frakb)\\
&=&(\id\ot\id\ot P_r)(\dt\ot\id)(\fraka\ot\frakb).
\end{eqnarray*}
\end{proof}

\subsection{The compatibilities of $\dt$ and $\vt$}
\mlabel{sec:Compability}
We now prove that $\dt$  and $\vt$ are algebra homomorphisms.

\begin{prop}
The comultiplication $\dt:\shan (A)\to \shan (A)\ot \shan (A)$ is an algebra homomorphism.
\mlabel{prop:comuhom}
\end{prop}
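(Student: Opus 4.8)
The plan is to prove the multiplicativity $\dt(\fraka\dr\frakb)=\dt(\fraka)\bul\dt(\frakb)$ for all pure tensors $\fraka\in A^{\ot m}$ and $\frakb\in A^{\ot n}$ with $m,n\geq 1$; since $\dt$, $\dr$ and $\bul$ are all linear and every element of $\shan(A)$ is a linear combination of pure tensors, this suffices. Two preliminary facts I would record first: that $\shan(A)\ot\shan(A)$ is commutative, being the tensor product of the commutative algebra $\shan(A)$ with itself, so $\bul$ is commutative; and that $\da=\Delta_A$ is an algebra homomorphism, since $A$ is a left counital bialgebra (only the counit is weakened, not the multiplication–comultiplication compatibility). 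Unitality is then immediate, because $1_A\in A^{\ot 1}$ is the unit of $(\shan(A),\dr)$ and $\dt(1_A)=\Delta_A(1_A)=1_A\ot 1_A$. The main argument proceeds by induction on $m+n$. In the base case $m=n=1$ one has $\fraka\dr\frakb=a_1b_1$, and $\dt(a_1b_1)=\Delta_A(a_1b_1)=\Delta_A(a_1)\bul\Delta_A(b_1)=\dt(\fraka)\bul\dt(\frakb)$.

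For the cases where one factor has degree one I would compute directly. Take $m=1$, $n\geq 2$; then Eq.~(\mref{eq:dfndr}) gives $\fraka\dr\frakb=a_1b_1\ot\frakb'$, and by Eq.~(\mref{eq:dfndtre}) and the homomorphism property of $\Delta_A$,
$$\dt(\fraka\dr\frakb)=\Delta_A(a_1b_1)\bul(\id\ot P_r)\dt(\frakb')=\Delta_A(a_1)\bul\Big(\Delta_A(b_1)\bul(\id\ot P_r)\dt(\frakb')\Big)=\dt(\fraka)\bul\dt(\frakb),$$
the last equality being Eq.~(\mref{eq:dfndtre}) for $\dt(\frakb)$. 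The symmetric case $m\geq 2$, $n=1$ is the same, now using commutativity of $\bul$ to move $\Delta_A(b_1)$ past the factor $(\id\ot P_r)\dt(\fraka')$ coming from $\fraka$.

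The essential case is $m,n\geq 2$. Here Eq.~(\mref{eq:dfndr}) writes $\fraka\dr\frakb=(a_1b_1)\dr P_r(c)$ with $c=\fraka'\dr(1_A\ot\frakb')+(1_A\ot\fraka')\dr\frakb'-1_A\ot(\fraka'\dr\frakb')$. Applying Eq.~(\mref{eq:dfndtre}) and factoring out $\Delta_A(a_1b_1)=\Delta_A(a_1)\bul\Delta_A(b_1)$ on both sides, and using commutativity and associativity of $\bul$, reduces the goal to the slot identity
$$(\id\ot P_r)\dt(c)=(\id\ot P_r)\dt(\fraka')\bul(\id\ot P_r)\dt(\frakb').$$
Each of the three summands of $c$ has total tensor degree strictly below $m+n$, so the induction hypothesis together with the defining relation Eq.~(\mref{eq:dtpr}) expresses $\dt(c)$ through $X:=\dt(\fraka')$, $Y:=\dt(\frakb')$ and $(\id\ot P_r)$ applied to each; explicitly $\dt(c)=X\bul(\id\ot P_r)Y+(\id\ot P_r)X\bul Y-(\id\ot P_r)(X\bul Y)$. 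Writing $X=\sum X_{(1)}\ot X_{(2)}$ and $Y=\sum Y_{(1)}\ot Y_{(2)}$ and expanding $\bul$ slotwise, the second tensor factor of $(\id\ot P_r)\dt(c)$ becomes
$$P_r\big(X_{(2)}\dr P_r(Y_{(2)})\big)+P_r\big(P_r(X_{(2)})\dr Y_{(2)}\big)-P_r^2(X_{(2)}\dr Y_{(2)}).$$

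The main obstacle, and the crux of the whole proposition, is this last display: by the Nijenhuis equation~(\mref{eq:Nij}) for the Nijenhuis algebra $(\shan(A),\dr,P_r)$ (applied in the right tensor slot with $x=X_{(2)}$, $y=Y_{(2)}$), it equals $P_r(X_{(2)})\dr P_r(Y_{(2)})$, which is exactly the second tensor factor of $(\id\ot P_r)\dt(\fraka')\bul(\id\ot P_r)\dt(\frakb')$; the first factors match trivially, so the slot identity holds and the induction closes. This is precisely where the cocycle condition $\dt P_r=(\id\ot P_r)\dt$ earns its keep: applying $(\id\ot P_r)$ converts the three-term combination defining $\dr$ in degrees $m,n\geq 2$ into the left-hand side of the Nijenhuis identity, so that it is the Nijenhuis relation, rather than a Rota--Baxter relation, that forces $\dt$ to be multiplicative. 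I expect the only delicate bookkeeping to be keeping the two tensor slots of $\shan(A)\ot\shan(A)$ distinct while checking that $(\id\ot P_r)$ and $\bul$ interact componentwise as claimed.
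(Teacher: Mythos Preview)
Your proposal is correct and follows essentially the same route as the paper: induction on $m+n$, with the base case handled by $\Delta_A$ being an algebra homomorphism, the mixed cases $m=1$ or $n=1$ handled directly, and the core case $m,n\geq 2$ reduced via the induction hypothesis and the cocycle relation~(\mref{eq:dtpr}) to the Nijenhuis identity~(\mref{eq:Nij}) applied in the second tensor slot. The only cosmetic difference is that you factor out $\Delta_A(a_1b_1)$ and isolate a ``slot identity'' to prove, whereas the paper computes the two sides of~(\mref{eq:alghom}) separately and compares; the substance---including the crucial observation that $(\id\ot P_r)$ turns the three-term expansion of $\dt(c)$ into the left side of the Nijenhuis equation---is identical.
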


\begin{proof}To prove that $\dt$ is an algebra homomorphism,
we only need to verify
\begin{equation}
\dt(\fraka\dr\frakb)=\dt(\fraka)\bul\dt(\frakb)
\mlabel{eq:alghom}
\end{equation}
for any pure tensors $\fraka:=a_1\ot \cdots \ot a_m\in A^{\ot m}$ and $\frakb:=b_1\ot \cdots \ot b_n, \in A^{\ot n},m,n\geq 1$. We do this by applying the induction on $m+n\geq 2$. If $m+n=2$, then $m=n=1$. So $\fraka,\frakb$ are in $A$ and Eq.~\eqref{eq:alghom} follows from the assumption that $\da:A\to A\ot A$ is an algebra homomorphism.

Let $k\geq 2$. Assume that Eq.~(\mref{eq:alghom}) has been proved for the case of $m+n\leq k$. Consider the case of $m+n=k+1$. Then either $m\geq 2$ or $n\geq 2$. We first verify the case when $m\geq 2$ and $n\geq 2$. Write $\fraka=a_1\ot \fraka'$ with $\fraka'=a_2\ot\cdots \ot a_m\in A^{\ot (m-1)}$ and $\frakb=b_1\ot \frakb'$ with $\frakb'=b_2\ot \cdots \ot b_n\in A^{\ot (n-1)}$. Then the left hand side of Eq.~(\mref{eq:alghom}) is

\begin{eqnarray*}
\dt(\fraka\dr\frakb)&=&\dt((a_1\ot \fraka')\dr (b_1\ot \frakb'))\\
&=&\dt((a_1\dr P_r(\fraka'))\dr(b_1\dr P_r(\frakb')))\\
&=&\dt((a_1b_1)\dr(P_r(\fraka')\dr P_r(\frakb')))\quad(\text{by the commutativity of}~\dr)\\
&=&\dt((a_1b_1)\dr\bigg(P_r\Big(\fraka'\dr P_r(\frakb')+P_r(\fraka')\dr\frakb'-P_r(\fraka'\dr\frakb')\Big)\bigg)\quad(\text{by Eq.~\eqref{eq:Nij}})\\
&=&\dt(a_1b_1\ot(\fraka'\dr(1_A\ot\frakb')+(1_A\ot\fraka')\dr\frakb'-1_A\ot (\fraka'\dr\frakb'))\quad(\text{by Eq.}~(\mref{eq:dfndr}))\\
&=& \da(a_1b_1)\bul\bigg((\id\ot P_r)\dt(\fraka'\dr(1_A\ot\frakb'))+(\id\ot P_r)\dt((1_A\ot\fraka')\dr\frakb')\\
&&-(\id\ot P_r)\Big((\id\ot P_r)\dt(\fraka'\dr\frakb')\Big)\bigg)\quad(\text{by Eqs.}~(\mref{eq:dtpr}) \text{ and} ~(\mref{eq:dfndtre})) \\
&=&\da(a_1b_1)\bul\bigg((\id\ot P_r)\Big(\dt(\fraka')\bul\dt(1_A\ot\frakb')\Big)+(\id\ot P_r)\Big(\dt(1_A\ot\fraka')\bul\dt(\frakb')\Big)\\
&&-(\id\ot P_r)\Big((\id\ot P_r)\big(\dt(\fraka')\bul\dt(\frakb')\big)\Big)\bigg)\quad(\text{by the induction hypothesis})\\
&=&\da(a_1b_1)\bul\bigg((\id\ot P_r)\Big(\dt(\fraka')\bul(\id\ot P_r)\dt(\frakb')\Big)\\
&&+(\id\ot P_r)\Big((\id\ot P_r)\dt(\fraka')\bul\dt(\frakb')\Big)\\
&&-(\id\ot P_r)\Big((\id\ot P_r)\big(\dt(\fraka')\bul\dt(\frakb')\big)\Big)\bigg) \quad(\text{by Eq.}~(\mref{eq:dtpr})).
\end{eqnarray*}
The right hand side of Eq.~(\mref{eq:alghom}) is
\begin{eqnarray*}
\dt(\fraka)\bul\dt(\frakb)&=&\dt(a_1\ot\fraka')\bul\dt(b_1\ot \frakb')\\
&=&\Big(\da(a_1)\bul (\id\ot P_r)\dt(\fraka')\Big)\bul \Big((\da(b_1)\bul(\id\ot P_r)\dt(\frakb')\Big)\quad(\text{by Eq.}~(\mref{eq:dfndtre}))\\
&=&\da(a_1b_1)\bul\Big((\id\ot P_r)\dt(\fraka') \bul(\id\ot P_r)\dt(\frakb')\Big)\quad(\text{by the commutativity of }~\bul)\\
&=&\da(a_1b_1)\bul\bigg((\id\ot P_r)\Big(\dt(\fraka')\bul (\id\ot P_r)\dt(\frakb')\Big)\\
&&+(\id\ot P_r)\Big((\id\ot P_r)\dt(\fraka')\bul\dt(\frakb')\Big)\\
&&-(\id\ot P_r)\Big((\id\ot P_r)\big(\dt(\fraka')\bul\dt(\frakb')\big)\Big)\bigg)\quad(\text{by Eq.~\eqref{eq:Nij}}).
\end{eqnarray*}
Thus $\dt(\fraka\dr\frakb)=\dt(\fraka)\bul\dt(\frakb).$ The verification of the other cases, namely when one of $m$ or $n$ is one, is simpler. This completes the induction.
\end{proof}
Now we prove that the counit $\vt:\shan (A)\to \bfk$ defined in Eq.~(\mref{eq:dfnvt} is an algebra homomorphism.
\begin{prop}
The counit $\vt$ is an algebra homomorphism.
\mlabel{prop:counithom}
\end{prop}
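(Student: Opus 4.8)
The plan is to prove directly that $\vt(\fraka\dr\frakb)=\vt(\fraka)\vt(\frakb)$ for all pure tensors $\fraka\in A^{\ot m}$ and $\frakb\in A^{\ot n}$, by induction on $m+n\geq 2$, in close parallel with the proof of Proposition~\ref{prop:comuhom}. Two preliminary observations do most of the work. First, since $A$ is a left counital bialgebra, its counit $\vep_A$ is a unital algebra homomorphism, so $\vep_A(1_A)=1$ and $\vep_A(xy)=\vep_A(x)\vep_A(y)$. Second, straight from the definition~\eqref{eq:dfnvt}, for any pure tensor $\frakc$ one has $\vt(a\ot\frakc)=\vep_A(a)\vt(\frakc)$; taking $a=1_A$ gives the key identity $\vt\circ P_r=\vt$, because $\vt(P_r(\frakc))=\vt(1_A\ot\frakc)=\vep_A(1_A)\vt(\frakc)=\vt(\frakc)$. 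Both identities pass from pure tensors to arbitrary elements by linearity of $\vt$.

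For the base case $m=n=1$, the first line of~\eqref{eq:dfndr} gives $\fraka\dr\frakb=a_1b_1$, so $\vt(\fraka\dr\frakb)=\vep_A(a_1b_1)=\vep_A(a_1)\vep_A(b_1)=\vt(\fraka)\vt(\frakb)$ by multiplicativity of $\vep_A$. The cases in which exactly one of $m,n$ equals $1$ follow in the same way from the second and third lines of~\eqref{eq:dfndr}, and do not even need the induction hypothesis. The heart of the argument is the case $m,n\geq 2$: writing $\fraka=a_1\ot\fraka'$ and $\frakb=b_1\ot\frakb'$ and inserting the fourth line of~\eqref{eq:dfndr}, I apply $\vt(a_1b_1\ot(-))=\vep_A(a_1b_1)\vt(-)$ together with linearity to peel off the factor $\vep_A(a_1b_1)=\vep_A(a_1)\vep_A(b_1)$, leaving
$$\vt\big(\fraka'\dr P_r(\frakb')\big)+\vt\big(P_r(\fraka')\dr\frakb'\big)-\vt\big(P_r(\fraka'\dr\frakb')\big).$$
Each summand collapses to $\vt(\fraka')\vt(\frakb')$: in the first two I apply the induction hypothesis (the total degrees are $(m-1)+n$ and $m+(n-1)$, both $=m+n-1$, hence covered) and then $\vt\circ P_r=\vt$; in the third I use $\vt\circ P_r=\vt$ first and then the induction hypothesis at total degree $(m-1)+(n-1)$. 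Thus the bracket equals $(1+1-1)\vt(\fraka')\vt(\frakb')=\vt(\fraka')\vt(\frakb')$, and $\vt(\fraka\dr\frakb)=\vep_A(a_1)\vep_A(b_1)\vt(\fraka')\vt(\frakb')=\vt(\fraka)\vt(\frakb)$, completing the induction.

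The main obstacle is only the bookkeeping in this last step: one must check that the induction hypothesis is invoked solely at total degree $\leq m+n-1$ on genuine pure tensors (note that $P_r(\frakb')=1_A\ot\frakb'$ is again a pure tensor), and that the three Nijenhuis terms each reduce to the \emph{same} quantity, so that their coefficients $+1,+1,-1$ sum to $1$. This is precisely the cancellation that keeps $\vt$ multiplicative despite the Nijenhuis cross terms, and it is the one place where something could go wrong.

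I would also record a slicker alternative that bypasses the cancellation entirely. Since $(\bfk,\id_\bfk)$ is a commutative Nijenhuis algebra (the Nijenhuis equation reads $xy=xy+xy-xy$) and $\vep_A:A\to\bfk$ is an algebra homomorphism, the universal property of Theorem~\ref{thm:freeCNij} yields a Nijenhuis algebra homomorphism $\overline{\vep_A}:\shan(A)\to\bfk$ with $\overline{\vep_A}\circ j_A=\vep_A$; being a Nijenhuis homomorphism it satisfies $\overline{\vep_A}\circ P_r=\id_\bfk\circ\overline{\vep_A}=\overline{\vep_A}$. Unwinding this along the factorization $a_1\ot\fraka'=a_1\dr P_r(\fraka')$ of~\eqref{eq:drbase} and inducting on the tensor length shows $\overline{\vep_A}(\fraka)=\vep_A(a_1)\cdots\vep_A(a_n)=\vt(\fraka)$, so $\vt=\overline{\vep_A}$ is an algebra homomorphism for free. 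This route needs no control of the three-term cancellation, only the already-known multiplicativity of $\overline{\vep_A}$.
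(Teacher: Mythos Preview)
Your primary argument is correct and essentially identical to the paper's: the same induction on $m+n$, the same key identities $\vt(a\ot\frakc)=\vep_A(a)\vt(\frakc)$ and $\vt\circ P_r=\vt$, and the same $1+1-1$ cancellation in the Nijenhuis step. Your alternative via the universal property of $\shan(A)$ (with target the Nijenhuis algebra $(\bfk,\id_\bfk)$) is a genuinely different and cleaner route that the paper does not take; it trades the explicit three-term bookkeeping for a one-line appeal to Theorem~\ref{thm:freeCNij}, at the cost of needing to verify $\overline{\vep_A}=\vt$ by a short induction on tensor length.
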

\begin{proof}
It suffices to prove that
 \begin{equation}
 \vt(\fraka \dr\frakb)=\vt(\fraka)\vt(\frakb)
 \mlabel{eq:comvt}
 \end{equation}
 for any pure tensors $\fraka:=a_1\ot a_2\ot \cdots\ot a_m\in A^{\ot m}$ and
 $\frakb:=b_1\ot b_2\ot \cdots \ot b_n\in A^{\ot n}$  with $m,n\geq 1$.
We proceed with induction on $m+n\geq 2$. If $m=n=1$, then by Eqs.~(\mref{eq:dfndr}) and ~(\mref{eq:dfnvt})), we obtain
$$
\vt(\fraka\dr\frakb)=\vep_A(\fraka\frakb)
=\vep_A(\fraka)\vep_A(\frakb)
=\vt(\fraka)\vt(\frakb).
$$
Let $k\geq 2$. Assume that Eq.~(\mref{eq:comvt}) holds for $m+n\leq k$ and consider $m+n=k+1$. Then either $m\geq 2$ or $n\geq 2$. We will only verify the case when both $m\geq 2$ and $n\geq 2$ since the other cases are simpler. By Eq.~(\mref{eq:dfnvt}), we have
\begin{equation}
\vt(a_1\ot\fraka')=\vep_A(a_1)\vt(\fraka')=\vt(a_1)\vt(\fraka').
\mlabel{eq:vpr}
\end{equation}
Especially, $\vt(P_r(\fraka))=\vt(\fraka)$ since $\vep_A(1_A)=1$.
 Then
 \begin{eqnarray*}
 &&\vt(\fraka\dr\frakb)\\
 &=&\vt((a_1\ot \fraka')\dr (b_1\ot \frakb'))\\
 &=&\vt((a_1\dr P_r(\fraka')\dr (b_1\dr P_r(\frakb')\quad(\text{by Eq.}~(\mref{eq:drbase}))\\
 &=&\vt((a_1b_1)\dr P_r(\fraka'\dr P_r(\frakb')+P_r(\fraka')\dr\frakb'-P_r(\fraka'\dr \frakb'))\quad(\text{by Eq.~\eqref{eq:Nij}})\\
 &=&\vt((a_1b_1)\ot (\fraka'\dr P_r(\frakb')+P_r(\fraka')\dr\frakb'-P_r(\fraka'\dr \frakb')\quad(\text{by Eq.}~(\mref{eq:dfndr}))\\
 &=&\vt(a_1b_1)\bigg(\vt((\fraka'\dr P_r(\frakb'))+\vt(P_r(\fraka')\dr\frakb')-\vt(P_r(\fraka'\dr \frakb'))\bigg)\quad(\text{by Eq.}~(\mref{eq:dfnvt}))\\
 &=&\vt(a_1b_1)\bigg(\vt(\fraka')\vt(\frakb') +\vt(\fraka')\vt(\frakb')-\vt(\fraka')\vt(\frakb')\bigg) \\
  && \qquad \qquad (\text{by the induction hypothesis and Eq.}~(\mref{eq:vpr}))\\
 &=&\vt(a_1)\vt(b_1)\vt(\fraka')\vt(\frakb')\\
 &=&\vt(a_1\ot \fraka')\vt(b_1\ot \frakb')\quad(\text{by Eq.}~(\mref{eq:vpr}))\\
 &=&\vt(\fraka)\vt(\frakb).
 \end{eqnarray*}
\end{proof}

\subsection{The coassociativity of $\dt$ and the left counicity of $\vt$}
\mlabel{sec:Coasso}

We will prove that $\dt$ is coassociative and $\vt$ satisfies the left counicity property.

\begin{prop}
The comultiplication $\dt$ is coassociative, that is,
\begin{equation}
(\id\ot \dt)\dt=(\dt\ot\id )\dt.
\mlabel{eq:comult}
\end{equation}
\mlabel{prop:comult}
\end{prop}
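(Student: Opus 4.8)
The plan is to prove the coassociativity \eqref{eq:comult} by induction on the tensor length $n$ of a pure tensor $\fraka = a_1\ot\cdots\ot a_n \in A^{\ot n}$; since both $(\id\ot\dt)\dt$ and $(\dt\ot\id)\dt$ are linear, it suffices to verify the identity on pure tensors. For the base case $n=1$, where $\fraka = a_1\in A$ and $\dt(a_1) = \da(a_1)$, the claim reduces immediately to the coassociativity of the coproduct $\da$ of the left counital bialgebra $A$: because $\dt$ restricts to $\da$ on $A = A^{\ot 1}$, one has $(\id\ot\dt)\dt(a_1) = (\id\ot\da)\da(a_1)$ and $(\dt\ot\id)\dt(a_1) = (\da\ot\id)\da(a_1)$, which agree.

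For the inductive step I would write $\fraka = a_1\ot\fraka'$ with $\fraka'\in A^{\ot(n-1)}$ and feed in the recursion $\dt(\fraka) = \da(a_1)\bul(\id\ot P_r)\dt(\fraka')$ from \eqref{eq:dfndtre}. The key structural fact I would exploit is that $\id\ot\dt$ and $\dt\ot\id$ are algebra homomorphisms from $\shan(A)^{\ot 2}$ to $\shan(A)^{\ot 3}$ for the componentwise product $\bul$, each being a tensor product of the algebra homomorphism $\dt$ (Proposition~\ref{prop:comuhom}) with the identity. Applying $\id\ot\dt$ to the recursion and distributing over $\bul$, the first factor $(\id\ot\dt)\da(a_1)$ becomes $(\id\ot\da)\da(a_1)$, since the second tensor slot already lies in $A$, while the second factor $(\id\ot\dt)(\id\ot P_r)\dt(\fraka')$ is rewritten by \eqref{eq:comupr1} of Lemma~\ref{lem:comupr} as $(\id\ot\id\ot P_r)(\id\ot\dt)\dt(\fraka')$. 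Symmetrically, applying $\dt\ot\id$ produces $(\da\ot\id)\da(a_1)$ in the first factor and, via \eqref{eq:comupr2}, the second factor $(\id\ot\id\ot P_r)(\dt\ot\id)\dt(\fraka')$.

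At this point the induction hypothesis $(\id\ot\dt)\dt(\fraka') = (\dt\ot\id)\dt(\fraka')$ makes the two second factors coincide, and the coassociativity of $\da$ makes the two first factors coincide; recombining via $\bul$ then gives $(\id\ot\dt)\dt(\fraka) = (\dt\ot\id)\dt(\fraka)$, completing the induction.

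The step I expect to be the crux is the commutation of the outer comultiplication past the $(\id\ot P_r)$ in the recursion, that is, Lemma~\ref{lem:comupr}. This is exactly where the one-cocycle condition \eqref{eq:cocy}, encoded through \eqref{eq:dtpr}, does the work: it lets $P_r$ be pushed out to the third tensor slot so that the comultiplication acts on $\fraka'$ alone, which is what makes the induction hypothesis applicable to the shorter tensor $\fraka'$ rather than to $\fraka$. Once one is comfortable that $\id\ot\dt$ and $\dt\ot\id$ respect $\bul$, the remainder is routine bookkeeping.
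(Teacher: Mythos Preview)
Your proposal is correct and follows essentially the same approach as the paper's proof: induction on the tensor length, the base case reducing to the coassociativity of $\da$, and the inductive step using the factorization $\dt(\fraka)=\da(a_1)\bul(\id\ot P_r)\dt(\fraka')$ together with the multiplicativity of $\dt$ (Proposition~\ref{prop:comuhom}) and the commutation identities of Lemma~\ref{lem:comupr}. The only cosmetic difference is that the paper first rewrites $\fraka=a_1\dr P_r(\fraka')$ and then invokes Proposition~\ref{prop:comuhom} to obtain the factored form, whereas you quote Eq.~\eqref{eq:dfndtre} directly; the substance is identical.
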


\begin{proof}
To prove Eq.~(\mref{eq:comult}), we only need to verify the equation
\begin{equation}
(\id\ot \dt)\dt(\fraka)=(\dt\ot\id )\dt(\fraka)
\mlabel{eq:cop}
\end{equation}
for any pure tensor $\fraka:=a_1\ot \fraka'\in A^{\ot n}$ with $n\geq 1$. For this we apply the induction on $n\geq 1$.
If $n=1$, then $\fraka=a_1$ is in $A$, so Eq.~\eqref{eq:cop} follows from the coassociativity of $\da$.

Let $k\geq 1$.
Assume that Eq.~(\mref{eq:cop}) holds for $\fraka\in A^{\ot k}$. Consider $\fraka=a_1\ot \fraka'\in A^{\ot (k+1)}$. Then we have
\begin{eqnarray*}
(\id\ot \dt)\dt(\fraka)&=&(\id\ot \dt)\dt(a_1\dr P_r(\fraka'))\quad(\text{by Eq.}~(\mref{eq:drbase}))\\
&=&(\id\ot \dt)(\dt(a_1)\bul \dt( P_r(\fraka')))\quad(\text{by Proposition.}~\mref{prop:comuhom})\\
&=&(\id\ot \dt)\dt(a_1)\bul (\id\ot \dt)\dt( P_r(\fraka'))\\
&=&(\id\ot \dt)\dt(a_1)\bul (\id\ot \dt)(\id \ot P_r)\dt(\fraka')\quad(\text{by Eq.}~(\mref{eq:dtpr}))\\
&=&(\id\ot \dt)\dt(a_1)\bul (\id\ot \id\ot P_r)(\id \ot \dt)\dt(\fraka')\quad(\text{by Eq.}~(\mref{eq:comupr1}))\\
&=&(\dt\ot \id)\dt(a_1)\bul (\id\ot \id\ot P_r)(\dt \ot \id)\dt(\fraka')\ (\text{by the induction hypothesis}).
\end{eqnarray*}
On the other hand,
\begin{eqnarray*}
(\dt\ot \id)\dt(\fraka)&=&(\dt\ot\id)\dt(a_1\dr P_r(\fraka'))\quad(\text{by Eq.}~(\mref{eq:drbase}))\\
&=&(\dt\ot \id)(\dt(a_1)\bul \dt( P_r(\fraka')))\quad(\text{by Proposition.}~\mref{prop:comuhom})\\
&=&(\dt\ot \id)\dt(a_1)\bul (\dt\ot \id)\dt( P_r(\fraka'))\\
&=&(\dt\ot \id)\dt(a_1)\bul (\dt\ot \id)(\id \ot P_r)\dt(\fraka')\quad(\text{by Eq.}~(\mref{eq:dtpr}))\\
&=&(\dt\ot \id)\dt(a_1)\bul (\id\ot \id\ot P_r)(\dt\ot \id)\dt(\fraka')\quad(\text{by Eq.}~(\mref{eq:comupr2})).
\end{eqnarray*}
Thus $(\id\ot \dt)\dt(\fraka)=(\dt\ot \id)\dt(\fraka).$
This completes the induction.
\end{proof}

\begin{prop}
The $\bfk$-linear map $\vt$  satisfies the left counicity property:
\begin{equation}
(\vt\ot \id)\dt=\beta_\ell,
\end{equation}
where $\beta_\ell:\shan (A)\to \bfk\ot \shan (A),\fraka\mapsto 1\ot \fraka$ for all $\fraka\in A^{\ot k}, k\geq 1$.
\mlabel{prop:counit}
\end{prop}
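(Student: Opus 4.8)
The plan is to establish the pointwise identity $(\vt\ot\id)\dt(\fraka)=1\ot\fraka$ on each pure tensor $\fraka=a_1\ot\cdots\ot a_n\in A^{\ot n}$ by induction on the tensor length $n\geq 1$, mirroring the inductive definition of $\dt$ in Eqs.~\eqref{eq:dtpr} and~\eqref{eq:dfndtre}. Throughout I work under the canonical identification $\bfk\ot\shan(A)\cong\shan(A)$, so that the target of $\beta_\ell$ is read as $\shan(A)$ and the goal becomes $(\vt\ot\id)\dt(\fraka)=\fraka$. For the base case $n=1$ we have $\fraka=a_1\in A$, where $\vt$ restricts to $\vep_A$ and $\dt$ restricts to $\da$; hence $(\vt\ot\id)\dt(a_1)=(\vep_A\ot\id)\da(a_1)=1\ot a_1$ by the left counicity of the left counital bialgebra $A$. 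This is the only point at which the hypothesis on $A$ is used.

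Before the inductive step I would record two structural facts. First, since $\vt$ is an algebra homomorphism by Proposition~\mref{prop:counithom} and $\id$ is trivially one, the tensor map $\vt\ot\id\colon(\shan(A)\ot\shan(A),\bul)\to\bfk\ot\shan(A)\cong(\shan(A),\dr)$ is an algebra homomorphism; explicitly $(\vt\ot\id)(\eta\bul\zeta)=(\vt\ot\id)(\eta)\dr(\vt\ot\id)(\zeta)$ for all $\eta,\zeta\in\shan(A)\ot\shan(A)$. Second, because $P_r$ is linear and acts only on the second tensor factor, $\vt\ot\id$ commutes with $\id\ot P_r$ in the sense that $(\vt\ot\id)(\id\ot P_r)=P_r\,(\vt\ot\id)$ as maps $\shan(A)\ot\shan(A)\to\shan(A)$; this is immediate from $(\vt\ot\id)\big(\sum_i\eta_i\ot P_r(\zeta_i)\big)=\sum_i\vt(\eta_i)P_r(\zeta_i)=P_r\big(\sum_i\vt(\eta_i)\zeta_i\big)$.

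For the inductive step, write $\fraka=a_1\ot\fraka'$ with $\fraka'\in A^{\ot n}$ and apply $\vt\ot\id$ to the defining recursion $\dt(a_1\ot\fraka')=\da(a_1)\bul(\id\ot P_r)\dt(\fraka')$ of Eq.~\eqref{eq:dfndtre} (valid for every $a_1$, including $a_1=1_A$). By the first structural fact this splits as $\big((\vt\ot\id)\da(a_1)\big)\dr\big((\vt\ot\id)(\id\ot P_r)\dt(\fraka')\big)$. The first factor equals $a_1$ exactly as in the base case. For the second factor, the commutation fact rewrites it as $P_r\big((\vt\ot\id)\dt(\fraka')\big)$, which by the induction hypothesis equals $P_r(\fraka')=1_A\ot\fraka'$. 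Recombining and invoking Eq.~\eqref{eq:drbase} gives $a_1\dr(1_A\ot\fraka')=a_1\ot\fraka'=\fraka$, completing the induction.

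The computation is essentially routine once the two structural facts are isolated, so I expect no genuine obstacle beyond bookkeeping. The only points requiring care are the identification $\bfk\ot\shan(A)\cong\shan(A)$ together with the fact that $1_A\in A^{\ot 1}$ is the two-sided unit of $(\shan(A),\dr)$ and $\da(1_A)=1_A\ot 1_A$, which is what makes the general recursion~\eqref{eq:dfndtre} legitimately subsume the special case $a_1=1_A$ of Eq.~\eqref{eq:dtpr}; and the observation that left counicity is invoked only on the generating bialgebra $A$, the operator $P_r$ interacting with $\vt\ot\id$ in the transparent way recorded above. I emphasize that, unlike in the coassociativity proof of Proposition~\mref{prop:comult}, only the simpler commutation $(\vt\ot\id)(\id\ot P_r)=P_r(\vt\ot\id)$ is needed here rather than the full Lemma~\mref{lem:comupr}.
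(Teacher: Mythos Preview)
Your proof is correct and follows essentially the same route as the paper's: induction on the tensor length, the base case handled by the left counicity of $A$, and the inductive step carried out by splitting $\dt(a_1\ot\fraka')$ via the multiplicativity of $\vt\ot\id$ (Proposition~\mref{prop:counithom}), commuting $\vt\ot\id$ past $\id\ot P_r$, applying the induction hypothesis, and reassembling with Eq.~\eqref{eq:drbase}. The only differences are cosmetic: you isolate the two structural facts beforehand and work under the identification $\bfk\ot\shan(A)\cong\shan(A)$, whereas the paper keeps $\beta_\ell$ explicit throughout and appeals to it being an algebra isomorphism at the end.
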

\begin{proof}
We only need to verify
\begin{equation}
(\vt\ot \id)\dt(\fraka)=\beta_\ell(\fraka)
\mlabel{eq:vtl}
\end{equation}
for any pure tensor $\fraka\in A^{\ot k}$ with $k\geq 1$ for which we apply the induction on $k\geq 1$. If $k=1$, then $\fraka\in A$, and so Eq.~\eqref{eq:vtl} follows from the left counicity of $\vep_A$.

Let $k\geq 1$.
Assume that Eq.~(\mref{eq:vtl}) has been proved for $\fraka\in A^{\ot k}$. Consider $\fraka:=a_1\ot \fraka'\in A^{\ot (k+1)}$. Then
\begin{eqnarray*}
(\vt\ot \id )\dt(\fraka)&=&(\vt\ot \id)\dt(a_1\ot \fraka')\\
&=&(\vt\ot \id)\dt(a_1\dr P_r(\fraka'))\quad(\text{by Eq.}~(\mref{eq:drbase}))\\
&=&(\vt\ot \id)(\da(a_1)\bul\dt(P_r(\fraka')))\quad(\text{by Proposition}~\mref{prop:comuhom})\\
&=&(\vt\ot \id)\da(a_1)(\vt\ot \id)(\dt(P_r(\fraka')))\quad(\text{by Proposition}~\mref{prop:counithom})\\
&=&(\vt\ot \id)\da(a_1)(\vt\ot \id)(\id\ot P_r)\dt(\fraka')))\quad(\text{by Eq.}~(\mref{eq:dtpr}))\\
&=&(\vt\ot \id)\da(a_1)(\id\ot P_r)(\vt\ot \id)\dt(\fraka')))\\
&=&\beta_\ell(a_1)(\id\ot P_r)\beta_\ell(\fraka')\quad(\text{by the induction hypothesis})\\
&=&\beta_\ell(a_1)\beta_\ell(P_r(\fraka'))\quad(\text{by the definition of $\beta_\ell$})\\
&=&\beta_\ell(a_1\dr P_r(\fraka'))\quad(\text{by $\beta_\ell$ being an algebra isomorphism})\\
&=&\beta_\ell(\fraka).
\end{eqnarray*}
This completes the proof of Eq.~(\mref{eq:vtl}).
\end{proof}

Since $\vt$ is an extension of $\vep_A$, when $\vep_A$ does not satisfy the right counicity, nor will $\vt$. Assume that $\vep_A$ satisfies the right counicity, that is $A$ is a bialgebra. One can check that $\vt$ as defined is left counital, but is not right counital. For example, take $\fraka:=a_1\ot a_2\in A^{\ot 2}$. Define the linear map $\beta_r:\shan (A)\to \shan (A)\ot\bfk,\fraka\mapsto \fraka\ot 1$.  Then
\begin{eqnarray*}
(\id\ot \vt)\dt(\fraka)&=&(\id\ot\vt)\dt(a_1\ot a_2)\\
&=&(\id\ot\vt)\dt(a_1\dr P_r(a_2))\quad(\text{by Eq.}~(\mref{eq:drbase}))\\
&=&(\id\ot\vt)(\da(a_1)\bul\da(P_r(a_2)))\quad(\text{by Proposition}~\mref{prop:comuhom})\\
&=&(\id\ot\vt)\da(a_1)(\id\ot\vt)(\da(P_r(a_2)))\quad(\text{by Proposition}~\mref{prop:counithom})\\
&=&(\id\ot\vt)\da(a_1)(\id\ot\vt)(\id\ot P_r)\da(a_2)\quad(\text{by Eq.}~\eqref{eq:dtpr})\\
&=&(\id\ot\vep_A)\da(a_1)(\id\ot \vep_A)\da(a_2)\quad (\text{by Eq.~\eqref{eq:dfnvt}})\\
&=&\beta_r(a_1)\beta_r(a_2)\quad(\text{by the right counicity of $\vep_A$})\\
&=&\beta_r(a_1 \dr a_2)\quad(\text{by $\beta_r$ being an algebra isomorphism})\\
&=&\beta_r(a_1a_2)\\
&\neq&\beta_r(\fraka).
\end{eqnarray*}

Define
$$\mu_T:\bfk\to \shan (A),\,\,c\mapsto c1_A, c\in \bfk.$$
Then $\mu_T$ is a unit for  $(\shan (A),\dr)$.
Now let us put all the pieces together to give the main result of this section.
\begin{theorem}
The six-tuple $(\shan (A),\dr,\mu_T,\dt,\vt, P_r)$ is a left counital cocycle bialgebra.
\mlabel{thm:main}
\end{theorem}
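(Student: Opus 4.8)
The plan is to assemble the results established throughout this section and check, one by one, the clauses in the definition of a left counital cocycle bialgebra. Recall that such a structure requires: an operated algebra $(\shan(A),\dr,\mu_T,P_r)$; a left counital coalgebra structure $(\shan(A),\dt,\vt)$, meaning that $\dt$ is coassociative and $\vt$ is left counital; the bialgebra compatibility that $\dt$ and $\vt$ are algebra homomorphisms; and finally the one-cocycle property $\dt P_r=(\id\ot P_r)\dt$.

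First I would record the algebra side. By Theorem~\mref{thm:freeCNij}, $(\shan(A),\dr,P_r)$ is a commutative Nijenhuis algebra, so in particular $(\shan(A),\dr)$ is a commutative associative algebra and $P_r$ is a linear operator on it; together with the unit $\mu_T$ verified just above the statement, this exhibits $(\shan(A),\dr,\mu_T,P_r)$ as an operated algebra. Next I would invoke the coalgebra and compatibility results. Coassociativity of $\dt$ is Proposition~\mref{prop:comult}, and the left counicity of $\vt$ is Proposition~\mref{prop:counit}, so $(\shan(A),\dt,\vt)$ is a left counital coalgebra. That $\dt$ and $\vt$ are algebra homomorphisms is exactly Propositions~\mref{prop:comuhom} and~\mref{prop:counithom}; since a bialgebra is precisely an algebra together with a coalgebra structure whose comultiplication and counit are algebra maps relative to $(\shan(A),\dr,\mu_T)$, these two propositions supply the full bialgebra compatibility.

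Finally, the one-cocycle property holds by construction rather than as an extra computation: for any $\frakb\in A^{\ot n}$ one has $P_r(\frakb)=1_A\ot\frakb$, so Eq.~\eqref{eq:dtpr} reads $\dt(P_r(\frakb))=(\id\ot P_r)\dt(\frakb)$, which is precisely the cocycle identity Eq.~\eqref{eq:cocy}. Collecting these facts yields the theorem.

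There is no genuine obstacle remaining at this stage, since all of the substantive work—multiplicativity of $\dt$ and $\vt$ (Propositions~\mref{prop:comuhom} and~\mref{prop:counithom}), coassociativity (Proposition~\mref{prop:comult}), and left counicity (Proposition~\mref{prop:counit})—has already been carried out. The only point requiring care is bookkeeping: matching each clause of the definition to the proposition that establishes it, and observing that the one-cocycle identity is not a separate obligation but is built directly into the recursive definition of $\dt$ through Eq.~\eqref{eq:dtpr}. If anything is worth flagging, it is that ``bialgebra'' here means exactly that $\dt$ and $\vt$ are algebra homomorphisms, so that no verification of a unit–counit interaction beyond left counicity is needed.
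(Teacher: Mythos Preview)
Your proposal is correct and follows essentially the same route as the paper's proof: both assemble Theorem~\mref{thm:freeCNij} for the algebra side, Propositions~\mref{prop:comult} and~\mref{prop:counit} for the left counital coalgebra structure, Propositions~\mref{prop:comuhom} and~\mref{prop:counithom} for the bialgebra compatibility, and note that the cocycle identity is built into the definition via Eq.~\eqref{eq:dtpr}. Your write-up is slightly more explicit about why the cocycle property requires no extra work, but the logical content is the same.
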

\begin{proof}
First according to Theorem~\mref{thm:freeCNij}, the triple $(\shan (A),\dr, \mu_T, P_r)$ is a Nijenhuis algebra. Applying Proposition~\mref{prop:comult} and Proposition~\mref{prop:counit}, the triple $(\shan (A),\dt,\vt)$ is a left counital coalgebra. Then by Proposition~\mref{prop:comuhom} and Proposition~\mref{prop:counithom}, the quintuple $(\shan (A),\dr,\mu_T,\dt,\vt)$ is a left counital bialgebra satisfying the one-cocyle property in Eq.~(\mref{eq:cocy}), and hence the six-tuple $(\shan (A),\dr,\mu_T,\dt,\vt, P_r)$ is a left counital cocycle bialgebra.
\end{proof}

\section{The left counital Hopf algebra structure on free commutative Nijenhuis algebras}
\mlabel{sec:hopf}
In this section, we provide a left counital Hopf algebra structure on the free commutative Nijenhuis algebra $\shan (A)$.
\begin{defn}
\begin{enumerate}
\item
A left counital bialgebra $(H,m,\mu,\Delta,\vep)$ is called a {\bf graded left counital bialgebra} if there are $\bfk$-submodules $H^{(n)}$, $n\geq 0$, of $H$ such that
\begin{equation}
 H=\oplus_{n\geq 0} H^{(n)};\quad
H^{(p)}H^{(q)}\subseteq H^{(p+q)};\quad
\Delta(H^{(n)})\subseteq (H^{(0)}\ot H^{(n)})\oplus\Big(\bigoplus_{\substack{p+q=n\\p>0,q>0}} H^{(p)}\ot H^{(q)}\Big) \ \tforall p,q,n\geq 0.
\end{equation}
\item
A graded left counital bialgebra is called {\bf connected} if $H^{(0)}=\im \mu(=\bfk )$ and
\begin{equation}
\ker\vep=\bigoplus_{n\geq 1}H^{(n)}.
\mlabel{eq:kervep}
\end{equation}
\end{enumerate}
\end{defn}

Let $A$ be a $\bfk$-algebra. Let $C$ be a left counital $\bfk$-coalgebra. Denote $R:=\Hom(C,A)$. For $f,g\in R$, we can still define the {\bf convolution product} of $f$ and $g$ by
$$f\ast  g:=m_A(f\ot g)\Delta_C.$$

\begin{lemma}Let $A:=(A,m_A,\mu_A)$ be a $\bfk$-algebra. Let $C:=(C,\Delta_C,\vep_C)$ be a left counital $\bfk$-algebra. Let $e:=\mu_A\vep_C$.
Then $e$ is a left unit of $\bfk$-algebra $\Hom(C,A)$ under the convolution product $\ast$.
\end{lemma}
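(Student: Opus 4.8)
The plan is to verify directly that $e = \mu_A \vep_C$ acts as a left unit under the convolution product, that is, to show $e \ast f = f$ for every $f \in \Hom(C,A)$. Unraveling the definition of the convolution product, for any $c \in C$ we compute
$$
(e \ast f)(c) = m_A(e \ot f)\Delta_C(c) = m_A\big((\mu_A \vep_C) \ot f\big)\Delta_C(c).
$$
The key observation is that this expression reorganizes into an application of the left counicity property of the coalgebra $C$. First I would rewrite $(\mu_A \vep_C) \ot f$ as $(\mu_A \ot \id_A)(\vep_C \ot f)$, and then use that $f = \id_A \circ f$ together with the structure of the tensor factors to push the $\vep_C$ through. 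Concretely, the composite $m_A(\mu_A \ot \id_A)$ is just the left-unit action of $A$ on itself, sending $1_\bfk \ot a \mapsto 1_A \cdot a = a$, so the whole expression collapses to $f$ applied to $(\vep_C \ot \id_C)\Delta_C(c)$.

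The crucial input is the left counicity axiom for $C$, namely $(\vep_C \ot \id_C)\Delta_C = \beta_\ell$, where $\beta_\ell(c) = 1 \ot c$. Applying this gives
$$
(\vep_C \ot \id_C)\Delta_C(c) = 1 \ot c,
$$
and feeding $1 \ot c$ through the remaining maps yields $m_A(\mu_A(1) \ot f(c)) = 1_A \cdot f(c) = f(c)$. Chaining these identities shows $(e \ast f)(c) = f(c)$ for all $c$, hence $e \ast f = f$, which is exactly the left-unit condition.

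The main point to be careful about, rather than a genuine obstacle, is the bookkeeping of how the maps $\mu_A$, $\vep_C$, $m_A$, and $f$ interleave across the tensor factors; one must track that the $\vep_C$ acting on the first tensor slot of $\Delta_C(c)$ combines with $\id_C$ on the second slot precisely so that the left counicity axiom applies. It is worth emphasizing that only the \emph{left} counit identity is used here, which is exactly why $e$ is only a \emph{left} unit: the symmetric computation for $f \ast e$ would require the right counicity $(\id_C \ot \vep_C)\Delta_C = \beta_r$, which need not hold for a left counital coalgebra. This asymmetry is the conceptual reason the lemma asserts only a one-sided unit, and I would remark on it to motivate the subsequent one-sided antipode construction.
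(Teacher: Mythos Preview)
Your proof is correct and follows essentially the same approach as the paper: both unwind the convolution $e\ast f$ by factoring $(\mu_A\vep_C)\ot f$ through $(\mu_A\ot\id_A)$ and $(\vep_C\ot\id_C)$, then invoke the left counicity $(\vep_C\ot\id_C)\Delta_C=\beta_\ell$ together with the unit axiom $m_A(\mu_A\ot\id_A)=\alpha_\ell$ to conclude $e\ast f=f$. The paper phrases this purely in terms of the canonical isomorphisms $\alpha_\ell,\beta_\ell$ rather than evaluating at an element $c$, but the argument is the same.
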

\begin{proof} Define a linear map $\alpha_\ell: \bfk\ot A\to A$, $k\ot a\mapsto ka$ for all $k\in \bfk$ and $a\in A$,  and a  linear map $\beta_\ell: C\to \bfk\ot C$, $c\mapsto 1_\bfk\ot c$ for all $c\in C$. Then $\alpha_\ell $ and $\beta_\ell$ are isomorphism. Since $\mu_A$ is the unit of $A$ and $\vep_C$ is the left counit of $C$, we have $m_A(\mu_A\ot \id_A)=\alpha_\ell$ and $(\vep_C\ot \id_C)\Delta_C=\beta_\ell$.
For every $f\in \Hom(C,A)$, we have
$$
e\ast f = m_A(\mu_A \vep_C\ot f)\Delta_C
= m_A(\mu_A\ot \id_A)(\id_\bfk\ot f)(\vep_C\ot \id_C)\Delta_C
=\alpha_\ell (\id_\bfk\ot f )\beta_\ell
=f.$$
\end{proof}
As usual, by the idempotency of $e$, the surjectivity of $\vep$ and the injectivity of $\mu$, we obtain
\begin{lemma}
Let $\bfk$ be a field. Let $H:=(H,m,\mu,\Delta,\vep)$ be a connected graded left counital $\bfk$-bialgebra. Then
\begin{equation}
H=\im \mu\oplus\ker \vep.
\end{equation}
\mlabel{lem:Hdir}
\end{lemma}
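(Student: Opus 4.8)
The plan is to realize the desired decomposition as the image/kernel splitting induced by a single idempotent $\bfk$-linear endomorphism, following the standard pattern for connected bialgebras (this is what the phrase ``as usual'' signals). Concretely, I would consider the composite
$$e := \mu\vep : H \to \bfk \to H,$$
and show that $e$ is idempotent with $\im e = \im\mu$ and $\ker e = \ker\vep$. The conclusion then follows from the elementary fact that any idempotent operator on a vector space over the field $\bfk$ splits that space as the direct sum of its image and its kernel.

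First I would record the one identity on which everything rests, namely $\vep\mu = \id_\bfk$. Since $\mu(c) = c\,1_H$ and $\vep$ is a unital algebra homomorphism (part of the left counital bialgebra structure, so that $\vep(1_H) = 1_\bfk$; note connectedness pins this down as well, since $1_H\in H^{(0)}=\im\mu$ lies outside $\ker\vep=\bigoplus_{n\ge 1}H^{(n)}$), we get $\vep\mu(c) = c\,\vep(1_H) = c$ for all $c\in\bfk$. Two consequences are immediate: $\mu$ is injective and $\vep$ is surjective. Moreover $e$ is idempotent, because
$$e^2 = \mu(\vep\mu)\vep = \mu\,\id_\bfk\,\vep = \mu\vep = e.$$

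Next I would invoke the idempotent splitting. For $x\in H$ one writes $x = e(x) + (x - e(x))$, where $e(x)\in\im e$ and $e(x-e(x)) = e(x) - e^2(x) = 0$, so $x - e(x)\in\ker e$; and if $y\in\im e\cap\ker e$, say $y = e(z)$, then $y = e(z) = e^2(z) = e(y) = 0$. Hence $H = \im e\oplus\ker e$. It remains to identify the two summands. Because $\vep$ is surjective, $\im e = \mu(\vep(H)) = \mu(\bfk) = \im\mu$; and because $\mu$ is injective, $\vep(x) = 0$ is equivalent to $\mu\vep(x) = 0$, so $\ker e = \ker\vep$. This yields $H = \im\mu\oplus\ker\vep$, as claimed.

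I do not expect a genuine obstacle here: once $\vep\mu = \id_\bfk$ is in hand the argument is pure linear algebra, and the only point deserving attention is that the one-sidedness of the counit causes no difficulty, since only the relation $\vep\mu = \id_\bfk$ (equivalently $\vep(1_H)=1_\bfk$), and not the full two-sided counit axiom, is used. I would also remark that the connected graded hypothesis is in fact stronger than strictly needed for this particular statement: the definition of connectedness already gives $\im\mu = H^{(0)}$ and $\ker\vep = \bigoplus_{n\ge 1}H^{(n)}$, so the splitting can alternatively be read off directly from the grading $H = \bigoplus_{n\ge 0}H^{(n)}$. The idempotent argument is nonetheless preferable because it is self-contained and does not reprocess the graded data.
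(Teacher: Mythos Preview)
Your argument is correct and is exactly the approach the paper indicates: the sentence preceding the lemma (``As usual, by the idempotency of $e$, the surjectivity of $\vep$ and the injectivity of $\mu$, we obtain'') is precisely your outline, and you have simply supplied the details. Nothing further to add.
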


\begin{defn}
Let $H:=(H,m,\mu,\Delta,\vep)$ be a left counital $\bfk$-bialgebra. Let $e:=\mu\vep$.
\begin{enumerate}
\item
A $\bfk$-linear map $S$ of $H$ is called a {\bf right antipode} for $H$ if
\begin{equation}
 \id_H\ast S=e.
\end{equation}
\item
A left counital bialgebra with a right antipode is called a {\bf left counital right antipode Hopf algebra} or simply a {\bf left counital Hopf algebra}.
\end{enumerate}
\end{defn}

Since any Hopf algebra or any right Hopf algebra in the sense of~\mcite{GNT} is a left counital (right antipode) Hopf algebra, there are plenty examples of left counital Hopf algebras.

\begin{prop}
Let $\bfk$ be a field. Let $H:=(H,m,\mu,\Delta,\vep)$ be a connected graded left counital $\bfk$-bialgebra. Then for any $x\in H^{(n)},n\geq 1$,
\begin{equation}
\Delta(x)=1\ot x+\tilde{\Delta}(x), \quad\tilde{\Delta}(x)\in \ker\vep\ot \ker\vep.
\end{equation}
\end{prop}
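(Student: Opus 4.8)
```latex
The plan is to establish the decomposition $\Delta(x)=1\ot x+\tilde\Delta(x)$ with $\tilde\Delta(x)\in\ker\vep\ot\ker\vep$ by first pinning down the $H^{(0)}\ot H^{(n)}$ component of $\Delta(x)$ using left counicity, and then showing the remaining components lie in $\ker\vep\ot\ker\vep$. Since $H$ is connected and graded, by the grading axiom for $\Delta$ we may write, for $x\in H^{(n)}$ with $n\ge 1$,
\begin{equation}
\Delta(x)=\sum_i \lambda_i(1\ot x_i)+\sum_{\substack{p+q=n\\p>0,q>0}} x'_{(p)}\ot x''_{(q)},
\notag
\end{equation}
where the first sum collects the $H^{(0)}\ot H^{(n)}$ part (using $H^{(0)}=\im\mu=\bfk1$ by connectedness) and the second sum has both tensor factors in positive degree. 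The first step is to determine the coefficient in front of $1\ot x$.

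To identify the $H^{(0)}\ot H^{(n)}$ component, I would apply the left counicity property $(\vep\ot\id)\Delta=\beta_\ell$, which gives $(\vep\ot\id)\Delta(x)=1\ot x$. Applying $\vep\ot\id$ to the displayed expression: on the second sum, each $x'_{(p)}$ lies in $H^{(p)}$ with $p\ge 1$, hence by Eq.~(\mref{eq:kervep}) it lies in $\ker\vep$, so $\vep(x'_{(p)})=0$ and the whole second sum is annihilated. On the first sum, since $x_i\in H^{(n)}$ and $\vep(1)=1$, we obtain $\sum_i\lambda_i(1\ot x_i)=1\ot x$. This forces the $H^{(0)}\ot H^{(n)}$ part to be exactly $1\ot x$, so I may rename the second sum as $\tilde\Delta(x)$ and conclude $\Delta(x)=1\ot x+\tilde\Delta(x)$ with $\tilde\Delta(x)\in\bigoplus_{p+q=n,\,p,q>0}H^{(p)}\ot H^{(q)}$.

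It remains to verify $\tilde\Delta(x)\in\ker\vep\ot\ker\vep$. For the left tensor factor this is immediate: each $x'_{(p)}\in H^{(p)}$ with $p\ge 1$ lies in $\ker\vep$ by connectedness. For the right tensor factor one cannot invoke right counicity (it fails in the left counital setting), so instead I would argue directly from the grading: each $x''_{(q)}\in H^{(q)}$ with $q\ge 1$, and again by Eq.~(\mref{eq:kervep}) this means $x''_{(q)}\in\ker\vep$. Thus every term of $\tilde\Delta(x)$ sits in $\ker\vep\ot\ker\vep$, as required.

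The main obstacle to watch for is the asymmetry of the left counital setting: the usual symmetric argument (applying $\id\ot\vep$ to kill one side) is unavailable because right counicity does not hold. The resolution is that connectedness via Eq.~(\mref{eq:kervep}) gives $\bigoplus_{n\ge1}H^{(n)}=\ker\vep$ \emph{on both sides independently} of counicity, so membership in $\ker\vep$ for a positively-graded homogeneous element is a purely grading-theoretic fact rather than a counit computation. This is precisely why the connectedness hypothesis, rather than any two-sided counit property, is doing the essential work here.
```
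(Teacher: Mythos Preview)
Your proof is correct and follows essentially the same approach as the paper: use the grading axiom together with $H^{(0)}=\bfk 1$ to split $\Delta(x)$ into a $1\ot(\text{something})$ piece plus a piece with both factors in positive degree, invoke Eq.~(\mref{eq:kervep}) to place the latter in $\ker\vep\ot\ker\vep$, and use left counicity to identify the first piece as $1\ot x$. The paper's version is slightly terser (it writes the degree-zero component directly as $1\ot u$ for a single $u\in H^{(n)}$ rather than as a sum), but the argument is the same.
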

\begin{proof}
According to $\Delta(H^{(n)})\subseteq  (H^{(0)}\ot H^{(n)})\oplus (\bigoplus_{\substack{p+q=n\\p>0,q>0}}H^{(p)}\ot H^{(q)})$ and $H^{(0)}=\bfk$, we get
$$\Delta(x)=1\ot u+\tilde{\Delta}(x),$$
where $u\in H^{(n)}$ and $\tilde{\Delta}(x)\in \bigoplus_{\substack{p+q=n\\p>0,q>0}}H^{(p)}\ot H^{(q)}$. Then by Eq.~(\mref{eq:kervep}), $\tilde{\Delta}(x)\in \ker\vep\ot \ker\vep$. By the left counit property, we obtain
$$
x=\beta_\ell^{-1}(\beta_\ell(x))
=\beta_\ell^{-1}(\vep\ot I)\Delta(x)
=\beta_\ell^{-1}(\vep\ot I)(1\ot u+\tilde{\Delta}(x))
=u.
$$
Thus,
$$\Delta(x)=1\ot x+\tilde{\Delta}(x).$$
\end{proof}

\begin{theorem}Let $\bfk$ be a field. Then
a connected graded  left counital  $\bfk$-bialgebra is a left counital Hopf algebra. Its antipode $S$ is defined by the recursion

\begin{equation}
S(1_H)=1_H, \ S(x) = -\sum_{(x)} x'S(x''), \ x\in \ker \vep,
\mlabel{eq:antipode}
\end{equation}
using  Sweedler's notation
$\tilde{\Delta}(x)=\sum_{(x)}x'\ot x''.$
\mlabel{thm:conn}
\end{theorem}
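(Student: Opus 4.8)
The plan is to adapt the classical construction of the antipode in a connected graded bialgebra to the left counital setting, being careful that only the left-sided counicity and the right antipode identity $\id_H \ast S = e$ are available. First I would verify that the recursion in Eq.~(\mref{eq:antipode}) is well-defined by induction on the grading. By Lemma~\mref{lem:Hdir} we have $H = \im\mu \oplus \ker\vep$, so every element decomposes into its scalar part and its part in $\ker\vep = \bigoplus_{n\geq 1} H^{(n)}$; it suffices to define $S$ on each $H^{(n)}$. For $x \in H^{(n)}$ with $n \geq 1$, the preceding Proposition gives $\tilde{\Delta}(x) = \sum_{(x)} x' \ot x'' \in \ker\vep \ot \ker\vep$, and a standard argument shows each $x'$ and $x''$ lies in some $H^{(p)}$ with $0 < p < n$. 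Hence the right-hand side $-\sum_{(x)} x' S(x'')$ only invokes $S$ on strictly lower-degree elements, so the recursion terminates and $S$ is well-defined as a $\bfk$-linear map.

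Next I would check that this $S$ is genuinely a right antipode, i.e.\ that $\id_H \ast S = e = \mu\vep$. I would do this degree by degree. On $\im\mu = H^{(0)} = \bfk$, one has $\Delta(1_H) = 1_H \ot 1_H$ and $S(1_H) = 1_H$, so $(\id_H \ast S)(1_H) = m(1_H \ot 1_H) = 1_H = e(1_H)$, and $e$ acts as the identity scalar here. For $x \in H^{(n)}$ with $n \geq 1$, using the Proposition's decomposition $\Delta(x) = 1 \ot x + \tilde{\Delta}(x)$ I would compute
\begin{equation}
(\id_H \ast S)(x) = m(\id_H \ot S)\Delta(x) = m(\id_H \ot S)(1\ot x) + m(\id_H \ot S)\tilde{\Delta}(x) = S(x) + \sum_{(x)} x' S(x''). \notag
\end{equation}
By the very definition of $S(x)$ in Eq.~(\mref{eq:antipode}) this sum is $0$, which equals $e(x)$ since $x \in \ker\vep$ forces $e(x) = \mu\vep(x) = 0$. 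Thus $\id_H \ast S = e$ on all of $H$ by linearity.

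The one genuinely delicate point, and the step I expect to be the main obstacle, is that in this one-sided setting $e = \mu\vep$ is only a \emph{left} unit for the convolution algebra $\Hom(H,H)$, as established in the Lemma preceding Lemma~\mref{lem:Hdir}; it need not be a two-sided unit because right counicity of $\vt$ fails. Consequently I must be scrupulous to prove only the identity $\id_H \ast S = e$ and never appeal to a left inverse or to the associativity-plus-unit cancellation that underlies the usual two-sided antipode argument. In particular I cannot symmetrically conclude $S \ast \id_H = e$, which is exactly why the theorem claims only a right antipode. The computation above sidesteps this because it uses $\Delta(x) = 1 \ot x + \tilde\Delta(x)$, whose leading term $1 \ot x$ is precisely what the left counicity $(\vt \ot \id)\dt = \beta_\ell$ guarantees; this is the structural fact that makes the one-sided recursion close. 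I would therefore emphasize that every use of a unit axiom is a use of the left-sided statement only, and that the grading hypothesis $\Delta(H^{(n)}) \subseteq (H^{(0)}\ot H^{(n)}) \oplus \bigoplus_{p+q=n, p,q>0} H^{(p)}\ot H^{(q)}$ is what both makes the recursion well-founded and supplies the required form of the coproduct.
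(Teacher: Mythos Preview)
Your proposal is correct and takes essentially the same approach as the paper: the paper's own proof simply says to check directly that the map $S$ defined by Eq.~(\mref{eq:antipode}) satisfies $(\id_H\ast S)(x)=e(x)$ for $x=1_H$ and $x\in\ker\vep$, citing the classical connected-graded-bialgebra argument, and you have carried out exactly that verification in detail. Your additional care in noting that only the left counicity and the left-unit property of $e$ are used, and that the grading makes the recursion well-founded, is accurate and appropriate to the one-sided setting.
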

\begin{proof}
The proof is the same as the proof of the fact that a connected graded bialgebra is a Hopf algebra~\mcite{Gub,Ma}, by checking directly that the map $S$ defined by Eq.~\eqref{eq:antipode} satisfies
$$(\id_H\ast S)(x)=e(x)$$
for $x=1_H$ and $x\in \ker \vep$.
\end{proof}

\begin{theorem} Let $A=\oplus_{n\geq 0}A^{(n)}$ be a connected graded left counital bialgebra. Then the bialgebra $\shan (A)$ is a left counital Hopf algebra.
\mlabel{thm:Hopf}
\end{theorem}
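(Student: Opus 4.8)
The plan is to reduce the statement to the two general results already proved and then to supply the missing ingredient, a grading. By Theorem~\mref{thm:main} the sextuple $(\shan(A),\dr,\mu_T,\dt,\vt,P_r)$ is a left counital cocycle bialgebra, and by Theorem~\mref{thm:conn} (over a field $\bfk$) every \emph{connected graded} left counital bialgebra is a left counital Hopf algebra, the right antipode being furnished by the recursion~\eqref{eq:antipode}. Hence it suffices to equip $\shan(A)$ with a grading for which it is connected graded in the required sense; everything else then follows, so the real content is the construction and verification of this grading.

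The key point is that the grading must \emph{not} be the naive one. Because $\vep_A(1_A)=1$, the pure unit tensors $u_k:=1_A^{\ot(k+1)}=P_r^k(1_A)$ satisfy $\vt(u_k)=1$, so they cannot lie in the augmentation ideal $\ker\vt$; thus the naive assignment $\deg(a_1\ot\cdots\ot a_m)=\sum_i\deg_A(a_i)+(m-1)$ fails connectedness, as it would place $u_k$ in positive degree while $\vt(u_k)\neq 0$. The remedy is to replace $P_r$ by the shifted operator
\[
\nabla:=P_r-\id_{\shan(A)},\qquad \nabla(x)=1_A\ot x-x,
\]
and to grade $\shan(A)$ by declaring $A^{(n)}\subseteq A=A^{\ot 1}$ to sit in degree $n$ and $\nabla$ to carry degree $+1$. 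Concretely, rewriting each monomial tensor through $a_1\ot\cdots\ot a_m=a_1\dr P_r(a_2\dr P_r(\cdots P_r(a_m)))$ together with $P_r=\id+\nabla$ expresses it in a homogeneous ``$\nabla$-basis''; in the scalar case $A=\bfk$ this recovers $\nabla^N(1_A)=s^N$ with $s=u_1-u_0$, so that $\shan(\bfk)=\bfk[s]$ is graded by $\deg s^N=N$.

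With this grading the connectedness becomes transparent: one has $H^{(0)}=\bfk1_A=\im\mu_T$, and since $\vt$ is an algebra homomorphism extending $\vep_A$ (Proposition~\mref{prop:counithom}), while $\vt\circ\nabla=0$ (immediate from $\vt P_r=\vt$) and $\vt$ vanishes on $\bigoplus_{n\geq1}A^{(n)}$, every positive-degree element lies in $\ker\vt$; hence $\ker\vt=\bigoplus_{N\geq1}H^{(N)}$. For the graded-bialgebra axioms I would check $H^{(p)}\dr H^{(q)}\subseteq H^{(p+q)}$ using that the defining relations are homogeneous: the product of $A$ is graded, and substituting $P_r=\id+\nabla$ into Eq.~\eqref{eq:Nij} gives the Nijenhuis equation for $\nabla$, namely $\nabla(x)\dr\nabla(y)=\nabla(\nabla(x)\dr y)+\nabla(x\dr\nabla(y))-\nabla^2(x\dr y)$, which is homogeneous of degree $\deg x+\deg y+2$. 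For the coproduct I would use that $\dt$ is an algebra homomorphism (Proposition~\mref{prop:comuhom}), that $\dt|_A=\da$ already satisfies the grading condition since $A$ is connected graded, and that the cocycle property~\eqref{eq:cocy} yields $\dt\,\nabla=(\id\ot\nabla)\dt$, placing every $\nabla$ on the right tensor factor and thereby giving $\dt(H^{(N)})\subseteq(H^{(0)}\ot H^{(N)})\oplus\bigoplus_{p+q=N,\,p,q>0}H^{(p)}\ot H^{(q)}$.

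The main obstacle is justifying that the $\nabla$-grading is a genuine $\bfk$-module decomposition of $\shan(A)$ that is simultaneously respected by the recursively defined product $\dr$. Since $\shan(A)$ is the free commutative Nijenhuis algebra on $A$ (Theorem~\mref{thm:freeCNij}) and the same shift shows it is equally free with respect to the Nijenhuis operator $\nabla$ on the same underlying algebra, this reduces to the structural fact that the free commutative Nijenhuis algebra on graded data, with its operator in degree $+1$, is graded; making this precise — for instance by an induction on tensor length showing that the product of two homogeneous elements is homogeneous, parallel to the induction in Proposition~\mref{prop:comuhom} — is the technical heart. Once the connected grading is in place, Theorem~\mref{thm:conn} applies verbatim and exhibits $\shan(A)$ as a left counital Hopf algebra.
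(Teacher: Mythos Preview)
Your approach differs substantively from the paper's. The paper uses precisely the grading you call ``naive'': it sets $\deg(a_1\ot\cdots\ot a_m)=\sum_i\deg_A(a_i)+(m-1)$ for homogeneous $a_i$, and then verifies by induction on tensor length that $\fraku^{(0)}=\bfk$, that $\fraku^{(p)}\dr\fraku^{(q)}\subseteq\fraku^{(p+q)}$, and that $\dt(\fraku^{(n)})$ lands in $(\fraku^{(0)}\ot\fraku^{(n)})\oplus\bigoplus_{p+q=n,\,p,q>0}\fraku^{(p)}\ot\fraku^{(q)}$, before invoking Theorem~\mref{thm:conn}. Your observation that this grading violates the connectedness axiom $\ker\vt=\bigoplus_{n\geq1}\fraku^{(n)}$ is correct --- indeed $u_k=1_A^{\ot(k+1)}\in\fraku^{(k)}$ while $\vt(u_k)=1$ --- and the paper never checks this condition. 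Your $\nabla$-grading, with $\nabla=P_r-\id$ carrying degree $+1$, genuinely satisfies all the hypotheses of Theorem~\mref{thm:conn} as stated: the identity $\vt\circ\nabla=0$ forces positive degree into $\ker\vt$, the cocycle relation $\dt\,\nabla=(\id\ot\nabla)\dt$ handles the coproduct condition, and freeness of $(\shan(A),\dr,\nabla)$ on $A$ (via the auto-equivalence $P\mapsto P-\id$ of commutative Nijenhuis algebras) supplies the module decomposition and the multiplicativity. So your route is not merely different but more careful on this point; the paper's grading can be salvaged only by weakening Theorem~\mref{thm:conn} to the recursion $S(x)=\vep(x)\,1_H-\sum x'S(x'')$ for homogeneous $x$ of positive degree, dropping the demand that positive-degree pieces lie in $\ker\vep$, which is not what is written there.
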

\begin{proof}
By Theorem~\mref{thm:conn}, it suffices to show that $\shan (A)$ is a connected graded left counital bialgebra.
For any element $0\neq a\in A$, we define the degree of $a$ by defining
\begin{equation}
\deg(a):=k, \, \text{ if } a\in A^{(k)},k\geq 0.
\end{equation}
In general, for any pure tensor $0\neq \fraka:=a_1\ot a_2\ot \cdots \ot a_m\in A^{\ot m}$, we define
\begin{equation}
\deg(\fraka):=\deg(a_1)+\deg(a_2)+\cdots+\deg(a_m)+m-1.
\mlabel{eq:dfndg}
\end{equation}
Then for $m\geq 2$, taking $\fraka= a_1\ot \fraka'$ with $\fraka':=a_2\ot \cdots\ot a_m$, we have
\begin{equation}
\deg(\fraka)=\deg(a_1)+\deg(\fraka')+1.
\mlabel{eq:degree}
\end{equation}

Let $\fraku:=\shan (A)$. We denote the linear span of $\{\fraka\in\fraku\,|\,\deg(\fraka)=k\}$ by $\fraku^{(k)}$. Then we get $\fraku^{(0)}=A^{(0)},$ and thus $\fraku^{(0)}=\bfk.$ Furthermore, we have
$$A^{(n)}\subseteq \fraku^{(n)},\,\fraku^{(i)}\cap\fraku^{(j)}=\{0\},\,i\neq j,  \,\,\text{and so}\,\, \fraku=\oplus_{n\geq 0}\fraku^{(n)}.$$
Let $\fraka=a_1\ot\fraka'\in A^{\ot m}$. Then by Eq.~(\mref{eq:degree}),
\begin{equation}
\deg(\fraka)=n\Rightarrow \deg(\fraka')=n-\deg(a_1)-1.
\mlabel{eq:degco1}
\end{equation}
and
\begin{equation}
\deg(\fraka')=n \Rightarrow \deg(\fraka)=\deg(a_1)+n+1
\mlabel{eq:degco2}
\end{equation}
To prove that
\begin{equation}
\fraku^{(p)}\dr\fraku^{(q)}\subseteq \fraku^{(p+q)},\,\tforall p,q\geq 0,
\mlabel{eq:fraku}
\end{equation}
it suffices to prove
\begin{equation}
\deg(\fraka\dr \frakb)=\deg(\fraka)+\deg(\frakb),\tforall \fraka\in \fraku^{(p)},\frakb\in\fraku^{(q)}.
\mlabel{eq:deg}
\end{equation}
Use induction on $p+q\geq 0$. If $p=q=0$, then $\fraku^{(p)}=\fraku^{(q)}=\fraku^{(0)}=\bfk$, and thus $\fraka\dr\frakb\in\bfk$. Then $\deg(\fraka\dr\frakb)=0=\deg(\fraka)+\deg(\frakb)$.
Assume that Eq.~(\mref{eq:deg}) has been proved for $p+q\leq k$. Consider $p+q=k+1$. If either $p=0$ or $q=0$, then $\fraku^{(P)}=\bfk$ or $\fraku^{(q)}=\bfk$. Then by the definition of $\dr$ in Eq.~(\mref{eq:dfndr}), either
$\fraka\dr\frakb=\frakb$
or
$\fraka\dr\frakb=\fraka$.
So Eq.~(\mref{eq:deg}) holds.
Thus we assume $p,q\geq 1$. Let $\fraka\in \fraku^{(p)}$ and $\frakb\in \fraku^{(q)}$. If $\fraka, \frakb\in A$, then $\fraka\dr \frakb =\fraka \frakb$, and so $$\deg(\fraka\dr\frakb)=\deg(\fraka\frakb)=\deg(\fraka)+\deg(\frakb).$$
If $\fraka\in A$ and $\frakb:=b_1\ot \frakb'\in A^{\ot \ell},\ell\geq 2$, then $\fraka\dr \frakb=\fraka\frakb_1\ot\frakb'$. By Eq.~(\mref{eq:degree}, we get
\begin{eqnarray*}
\deg(\fraka\dr\frakb)&=&\deg(\fraka b_1\ot\frakb)\\
&=&\deg(\fraka b_1)+\deg(\frakb')+\ell-1\\
&=&\deg(\fraka)+\deg(b_1)+\deg(\frakb')+\ell-1\\
&=&\deg(\fraka)+\deg(\frakb).
\end{eqnarray*}
Similarly, Eq.~(\mref{eq:deg}) holds for $\fraka\in A^{\ot m}$, $m\geq 2$, and $ \frakb\in A$.
We now consider pure tensors $\fraka:=a_1\ot \fraka'\in A^{\ot \ell}$ and $\frakb:=b_1\ot \frakb'\in A^{\ot m}$ with $\ell,m\geq 2$. Then
\begin{eqnarray*}
\fraka\dr \frakb&=&(a_1\dr P_r(\fraka'))\dr (b_1\dr P_r(\frakb'))\\
&=&(a_1b_1)\dr P_r(\fraka'\dr P_r(\frakb')+P_r(\fraka')\dr\frakb'-P_r(\fraka'\dr \frakb'))\\
&=&(a_1b_1)\ot (\fraka'\dr P_r(\frakb')+P_r(\fraka')\dr\frakb'-P_r(\fraka'\dr \frakb')).
\end{eqnarray*}
By Eq.~(\mref{eq:degco1}), we obtain $$\deg(\fraka')=p-\deg(a_1)-1 \quad \text{and}\quad\deg(\frakb')=q-\deg(b_1)-1,$$
and then by Eq.~(\mref{eq:degco2}),
$$\deg(P_r(\fraka'))=\deg(1_A\ot\fraka')=p-\deg(a_1)\quad\text{and}\quad \deg(P_r(\frakb'))=\deg(1_A\ot \frakb')=q-\deg(b_1).$$
By the induction hypothesis,  we get
$$\deg(\fraka'\dr P_r(\frakb'))=\deg( P_r(\fraka')\dr\frakb')=\deg( P_r(\fraka'\dr \frakb'))= p+q-\deg(a_1)-\deg(b_1)-1.$$
Thus by Eq.~(\mref{eq:degree}),
\begin{eqnarray*}\deg(\fraka \dr \frakb)&=&\deg(a_1b_1)+\deg((\fraka'\dr P_r(\frakb')+P_r(\fraka')\dr\frakb'-P_r(\fraka'\dr \frakb'))+1\\
&= &\deg(a_1)+\deg(b_1)+p+q-\deg(a_1)-\deg(b_1)-1+1\\
&=&p+q\\
&=&\deg(\fraka)+\deg(\frakb).
\end{eqnarray*}
This completes the induction.

Finally, we prove
\begin{equation}
\dt(\fraku^{(n)})\subseteq (\fraku^{(0)}\ot \fraku^{(n)})\oplus\Bigg(\bigoplus_{\substack{p+q=n\\p>0,q>0}} \fraku^{(p)}\ot \fraku^{(q)}\Bigg)\ \tforall\,p,q,n\geq 0,
\mlabel{eq:sum}
\end{equation}
by induction on $n\geq0$. If $n=0$, then $\fraku^{(0)}=\bfk$, and so $\dt(\fraku^{(0)})=\da(\bfk)=\bfk\ot \bfk=\fraku^{(0)}\ot \fraku^{(0)}$. Assume that Eq.~(\mref{eq:sum}) has been proved for $n\geq 0$. Consider $\fraka\in \fraku^{(n+1)}$. If $\fraka\in A(=\bigoplus_{k\geq0}A^{(k)})$, then we get $\fraka\in A^{(n+1)}$. Since $A$ is a connected graded left counital bialgebra, we have
$$\dt(\fraka)=\da(\fraka)\subseteq (A^{(0)}\ot A^{(n+1)})\oplus\Bigg(\bigoplus_{\substack{p+q=n+1\\p>0,q>0}}A^{(p)}\ot A^{(q)}\Bigg)\subseteq (\fraku^{(0)}\ot \fraku^{(n+1)})\oplus \Bigg(\bigoplus_{\substack{p+q=n+1\\p>0,q>0}}\fraku^{(p)}\ot \fraku^{(q)}\Bigg).$$
Let $\fraka=a_1\ot \fraka'\in A^{\ot(\ell+1)}$ with $\ell\geq 1$. Then
\begin{eqnarray*}
\dt(\fraka)&=&\dt(a_1\dr P_r(\fraka')) \quad(\text{by Eq.~}(\mref{eq:drbase}))\\
&=&\dt(a_1)\bul\dt(P_r(\fraka'))\quad(\text{by Proposition}~\mref{prop:comuhom})\\
&=&\dt(a_1)\bul(\id\ot P_r)\dt(\fraka')\quad(\text{by Proposition}~\mref{eq:dtpr})\\
&=&\da(a_1)\bul(\id\ot P_r)\dt(\fraka').
\end{eqnarray*}
By Eq.~(\mref{eq:degco1}), we get $\fraka'\in \fraku^{(n-\deg(a_1))}$. Then by the induction hypothesis,
$$\dt(\fraka')\subseteq (\fraku^{(0)}\ot \fraku^{(n-\deg(a_1))})\oplus\Bigg(\bigoplus_{\substack{p_2+q_2=n-\deg(a_1)\\p_2>0,q_2>0}}\fraku^{(p_2)}\ot \fraku^{(q_2)}\Bigg).$$
Thus
\begin{eqnarray*}
\dt(\fraka)&=&\da(a_1)\bul(\id\ot P_r)\dt(\fraka')\\
&\subseteq&\Bigg(\Bigg(A^{(0)}\ot A^{(\deg(a_1))}\Bigg)\oplus\Bigg(\bigoplus_{\substack{p_1+q_1=\deg(a_1)\\p_1>0,q_1>0}}A^{(p_1)}\ot A^{(q_1)}\Bigg)\Bigg)\\
&&\bul(\id\ot P_r)\,\Bigg( \fraku^{(0)}\ot \fraku^{(n-\deg(a_1))}\oplus\Bigg(\bigoplus_{\substack{p_2+q_2=n-\deg(a_1)\\p_2>0,q_2>0}}\fraku^{(p_2)}\ot \fraku^{(q_2)}\Bigg)\Bigg)\\
&\subseteq&\Bigg( \Bigg(\fraku^{(0)}\ot \fraku^{(\deg(a_1))}\Bigg)\oplus\Bigg(\bigoplus_{\substack{p_1+q_1=\deg(a_1)\\p_1>0,q_1>0}}A^{(p_1)}\ot A^{(q_1)}\Bigg)\Bigg)\\
&&\bul\Bigg( (\fraku^{(0)}\ot \fraku^{(n-\deg(a_1)+1)})\oplus\Bigg(\bigoplus_{\substack{p_2+q_2=n-\deg(a_1)\\p_2>0,q_2>0}}\fraku^{(p_2)}\ot \fraku^{(q_2+1)}\Bigg)\Bigg)\\
&\subseteq&\Bigg(\fraku^{(0)}\ot \fraku^{(n+1)}\Bigg)\oplus\Bigg(\bigoplus_{\substack{p_2+q_2=n-\deg(a_1)\\p_2>0,q_2>0}}\fraku^{(p_2)}\ot \fraku^{(q_2+1+\deg(a_1))}\Bigg)\\
&&\oplus \Bigg(\bigoplus_{\substack{p_1+q_1=\deg(a_1)\\p_1>0,q_1>0}}\fraku^{(p_1)}\ot \fraku^{(q_1+n-\deg(a_1)+1)}\Bigg)\\
&&\oplus\Bigg( \Bigg(\bigoplus_{\substack{p_1+q_1=\deg(a_1)\\p_1>0,q_1>0}}\fraku^{(p_1)}\ot \fraku^{(q_1)}\Bigg)\,\,\bul \Bigg(\bigoplus_{\substack{p_2+q_2=n-\deg(a_1)\\p_2>0,q_2>0}}\fraku^{(p_2)}\ot \fraku^{(q_2+1)}\Bigg)\Bigg)\\
&\subseteq&(\fraku^{(0)}\ot \fraku^{(n+1)})\oplus\Bigg(\bigoplus_{\substack{p_1+q_1=\deg(a_1)\\p_2+q_2=n-\deg(a_1)\\p_1+p_2>0,q_1+q_2>0}} \fraku^{(p_1+p_2)}\ot\fraku^{(q_1+q_2+1)}\Bigg)\\
&\subseteq&\big(\fraku^{(0)}\ot \fraku^{(n+1)}\big)\oplus \Bigg(\bigoplus_{\substack{p+q=n+1\\p>0,q>0}} \fraku^{(p)}\ot\fraku^{(q)}\Bigg)\quad(p:=p_1+p_2,q:=q_1+q_2+1),\\
\end{eqnarray*}
as needed. This completes the proof.
\end{proof}

\smallskip

\noindent {\bf Acknowledgements}: This work was supported by the National Natural Science Foundation of
China (Grant No.~11371178 and 11601199).

\end{document}